\newtheorem{theorem}{Theorem}[section]
\newtheorem{corollary}[theorem]{Corollary}
\newtheorem{definition}[theorem]{Definition}
\newtheorem{example}[theorem]{Example}
\newtheorem{lemma}[theorem]{Lemma}
\newtheorem{proposition}[theorem]{Proposition}
\theoremstyle{remark}
\newtheorem{remark}[theorem]{Remark}
\begin{document}

\title[Esitmates for heat operators and harmonic functions]{Dimension-independent estimates for heat operators and harmonic functions}
\author{Matthew Cecil}
\address{Department of Mathematics\\Purdue University\\ West Lafayette, IN 47907}
\curraddr{Department of Mathematics\\The University of Utah\\ Salt Lake City, UT 84112}
\email{mcecil{@}math.utah.edu}
\author{Brian C. Hall$^\dagger$}
\address{Department of Mathematics\\University of Notre Dame\\ Notre Dame, IN 46556}
\email{bhall{@}nd.edu}
\thanks{$\dagger$This research was partially supported by NSF Grants DMS-0555862 and DMS-1001328}
\date{\today}
\maketitle

\begin{abstract}
We establish dimension-independent estimates related to heat operators $e^{tL}$ on manifolds.  We first develop a very general
contractivity result for Markov kernels which can be applied to diffusion semigroups.  Second, we develop estimates on the norm behavior of
harmonic and non-negative subharmonic functions.  We apply these results to two examples of interest: when $L$ is the Laplace--Beltrami operator on a Riemannian manifold with  Ricci curvature bounded from below, and when $L$ is an invariant subelliptic operator of H\"{o}rmander type on a Lie group.  In the former example, we also obtain pointwise bounds on harmonic and subharmonic functions, while in the latter example, we obtain pointwise bounds on harmonic functions when a generalized curvature-dimension inequality is satisfied.
\end{abstract}

\section{Introduction and Statement of Results}\label{s.1}

The Laplace--Beltrami operator is of central importance in the study of analysis on a Riemannian manifold $M$. It defines the heat equation and special classes of functions such as harmonic and subharmonic. The Laplace--Beltrami operator can also be used to define heat kernel measures on $M$.  Heat kernel measures continue to exist on many infinite-dimensional manifolds where there is no reasonable notion of volume measure.

We are interested, then, in finding aspects of the study of diffusion operators and heat semigroups on
finite-dimensional manifolds for which the estimates can be made independent
of dimension. Such estimates then have a chance of also holding in the
infinite-dimensional settings.  Some potential infinite-dimensional applications of our results are discussed in the appendix.

The Laplace--Beltrami operator on a Riemannian manifold is the infinitesimal generator of a Brownian motion.  More generally, if $M$ is a smooth manifold, any $M$-valued diffusion $\{X_t\}_{t\geq 0}$ has an infinitesimal generator $L$ and an associated collection of \textit{heat kernel measures} $\mu^x_t$ (depending on $x\in M$) on $\mathcal{B}(M)$, the Borel $\sigma$-algebra of $M$.  The heat kernel measure is defined by
\[\mu^x_t(E)=P_x(X_t\in E)\]
for $E\in \mathcal{B}(M)$, where $P_x$ denotes the probability conditioned on $X_0=x$ a.s.  In the event that $\mu^x_t$ is absolutely continuous with respect to a natural volume measure (for example, Riemannian volume measure or Haar measure), its density is the \textit{heat kernel}, denoted by $\mu_t(x,y)$ in the sequel.  We will assume throughout that $M$ is stochastically complete, i.e. $P_x(X_t\in M)=1$ for all $x\in M$ and all $t\geq 0$.

We define the \textit{heat operator} acting on a function $f:M\rightarrow\mathbb{R}$ by the integral
\begin{equation}\label{e.888}
(e^{tL}f)(x):=\int_M f(y)\mu_t^x(dy)=\mathbb{E}_x[f(X_t)]
\end{equation}
whenever the integral converges.  When the heat kernel is the minimal fundamental solution to the heat equation, the function $u(t,x)=(e^{tL}f)(x)$ solves the heat equation
\[
\frac{\partial}{\partial t}u(t,x)=L u(t,x)\qquad \lim_{t\rightarrow 0^+}u(t,x)=f(x).
\]%

In this article, we discuss three types of dimension-independent estimates, all
of which concern functions that belong to $L^{p}$ with respect to a fixed heat
kernel measure $\mu_T^o$. The first concerns the heat operator itself, the second
gives a certain norm inequality for harmonic and non-negative subharmonic functions, and
the third gives pointwise bounds on harmonic functions in two examples of interest: when $L$ is the Laplace--Beltrami operator on a Riemannian manifold with Ricci curvature bounded from below and when $L$ is an invariant subelliptic operator of H\"{o}rmander type on a Lie group which satisfies a generalized curvature-dimension inequality.

Under mild assumptions (for example, if our heat kernel has a jointly measurable density with respect to a reference measure), the map $x\rightarrow\mu_t^x(E)$ is measurable for every $E\in\mathcal{B}(M)$.  In this event, our first result is the following:
\begin{theorem}\label{t.1.1}
Suppose $0\leq t<T$ and $1\leq
p\leq\infty .$ If $f\in L^{p}(M,\mu^o_{T}),$ then $%
(e^{tL}f)(x)$ (defined by Eq. (\ref{e.888})) is absolutely convergent for $\mu_{T-t}^o$-almost every $x$ in $M.$
Furthermore, $e^{tL}$ is a contraction from $L^{p}(M,\mu^o_{T}) $ to $L^{p}(M,\mu^o_{T-t}).$
\end{theorem}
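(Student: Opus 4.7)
The central ingredient is the Chapman--Kolmogorov identity
\[
\int_M \mu^x_t(E)\,\mu^o_{T-t}(dx)=\mu^o_T(E),\qquad E\in\mathcal{B}(M),
\]
which encodes the time-homogeneous Markov property of $X_t$: under $P_o$, conditioning on $X_{T-t}$ gives $P_o(X_T\in E\mid X_{T-t})=\mu^{X_{T-t}}_t(E)$, and integrating out $X_{T-t}$ yields the claim. Measurability of $x\mapsto\mu^x_t(E)$ (assumed in the sentence preceding the theorem) guarantees both sides make sense, and a standard monotone-class argument upgrades the identity to
\[
\int_M(e^{tL}g)(x)\,\mu^o_{T-t}(dx)=\int_M g(y)\,\mu^o_T(dy)
\]
for every non-negative Borel function $g$ on $M$.

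Given this identity, I would treat the three ranges of $p$ separately. For $p=1$, apply the identity to $g=|f|$; the right side equals $\|f\|_{L^1(\mu^o_T)}<\infty$, so by Tonelli $\int_M|f(y)|\,\mu^x_t(dy)<\infty$ for $\mu^o_{T-t}$-a.e.\ $x$, giving absolute convergence, and the pointwise bound $|e^{tL}f|\leq e^{tL}|f|$ integrated against $\mu^o_{T-t}$ yields $\|e^{tL}f\|_1\leq\|f\|_1$. For $p=\infty$, set $C=\|f\|_\infty$ and $N=\{y:|f(y)|>C\}$; then $\mu^o_T(N)=0$ forces $\mu^x_t(N)=0$ for $\mu^o_{T-t}$-a.e.\ $x$, and for such $x$ one has $|(e^{tL}f)(x)|\leq C$.

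For $1<p<\infty$, stochastic completeness gives $\mu^x_t(M)=1$, so Jensen's inequality applied to the convex function $s\mapsto s^p$ yields
\[
|(e^{tL}f)(x)|^p \leq \left(\int_M|f(y)|\,\mu^x_t(dy)\right)^p \leq \int_M|f(y)|^p\,\mu^x_t(dy)=(e^{tL}|f|^p)(x);
\]
integrating against $\mu^o_{T-t}$ and invoking the identity with $g=|f|^p$ gives $\|e^{tL}f\|_p\leq\|f\|_p$, and the finiteness of this integral forces absolute convergence of $(e^{tL}f)(x)$ for $\mu^o_{T-t}$-a.e.\ $x$.

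The only real obstacle is the Chapman--Kolmogorov identity itself; once that is in hand, the rest is a routine combination of Jensen and Tonelli. If the minimal probabilistic setup (time-homogeneous Markov property under $P_o$) is not immediately available in the level of generality the paper wants, one may have to verify it by hand using the heat-kernel formulation $\mu^x_t(E)=P_x(X_t\in E)$ and the semigroup property $\mu^o_T(E)=\mathbb{E}_o[\mu^{X_{T-t}}_t(E)]$.
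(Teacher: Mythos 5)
Your proposal is correct and follows essentially the same route as the paper: the paper packages the argument as a general contractivity theorem for Markov kernels (its Theorem \ref{t.558}), whose proof is exactly your combination of the Chapman--Kolmogorov identity $\nu_2=\int\mu^x_t(\cdot)\,\mu^o_{T-t}(dx)=\mu^o_T$, extension to non-negative measurable functions, Jensen's inequality, and Tonelli. The only cosmetic difference is at $p=\infty$, where the paper takes $p\to\infty$ in the finite-$p$ bounds rather than using your direct null-set argument; both work.
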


If $M$ is a Riemannian manifold with Ricci curvature bounded from below and $L$ is the Laplace--Beltrami operator, or more generally if the heat semigroup satisfies a curvature-dimension $CD(K,\infty)$ inequality, then it has already
been proved in \cite{bakry_bolley_gentil} that $e^{tL}$ is hypercontractive and maps $%
L^{p}(M,\mu^o_{T})$ to $L^{q}(M,\mu^o_{T-t})$ for certain values of $q>p$ (see Corollary \ref{c.387} below). Our
result is weaker in the sense that it only shows contractivity rather than
hypercontractivity, but it makes no additional assumptions on $M$ besides stochastic
completeness and measurability of the heat kernels.

Our proof of Theorem \ref{t.1.1} relies on a very general (and elementary)
contractivity result (Theorem \ref{t.558}) for certain maps between finite measure spaces known as Markov kernels.  As such, Theorem \ref{t.1.1} is \textit{directly applicable} to any
infinite-dimensional manifold $M$ provided a reasonable family of heat kernel measures $\{\mu_t^x\}_{x\in M}$ is known to exist.  That is, there is no need to define the heat operator on cylinder functions and then extend by linearity.

Our next results concern harmonic and subharmonic functions on $M$.  At a formal level, a function satisfying $Lf=0$ should also satisfy $e^{tL}f=f$.  However, in the general setting of Theorem \ref{t.1.1}, the measures $\mu_T^o$ and $\mu_{T-t}^o$ may be mutually singular.  In this event, it does not make sense to say that an element of $L^p(M,\mu_T^o)$ is equal to an element of $L^p(M,\mu_{T-t}^o)$.  Even when the measures $\mu_T^o$ and $\mu_{T-t}^o$ are equivalent, the operator $e^{tL}$ is computed by the integral (\ref{e.888}) and not a power series in the operator $L$.  Theorem \ref{t.337} below gives sufficient conditions on the process $X_t$ and its heat kernel to allow one to make rigorous the implication
\[Lf=0\Rightarrow e^{tL}f=f.\]
Theorem \ref{t.337} is then applied to two examples of particular interest.

For the remainder of the introduction, we summarize the results of the two examples found in Sections \ref{s.3.2} and \ref{s.3.3}.  For our first example, let $M$ denote a complete Riemannian manifold with volume measure $V$.  It is known that the heat kernel measures corresponding to $L=\frac{1}{2}\Delta$, where $\Delta$ denotes the Laplace--Beltrami operator, are absolutely continuous with respect to $V$.  The density $\mu_t(x,y)=\frac{d\mu_t^x(y)}{dV(y)}$ is in $C^\infty(M\times M)$ for all $t>0$ (see, for example, Theorem 7.20 of \cite{grigoryan_heat kernel}).  In this case, Eq. (\ref{e.888}) can be written
\begin{equation}\label{e.559}
(e^{t\Delta/2}f)(x)=\int_M f(y)\mu_t(x,y)V(dy).
\end{equation}

\begin{theorem}\label{t.888}
Let $M$ denote a complete Riemannian manifold with $%
Ric(M)\geq -K$ for some $K\geq 0$.  Fix $o\in M$ and $1<p<\infty$.

\begin{enumerate}
\item If $f\in L^{p}(M,\mu _{T}^{o})$ is harmonic then for all $0\leq t\leq T$ and
all $x\in M,$ we have%
\begin{equation}
\int_{M}f(y)\mu _{t}(x,y)V(dy)=f(x),  \label{harmonicHeat}
\end{equation}%
where the integral on the left-hand side of (\ref{harmonicHeat}) is
absolutely convergent. Thus, in the notation of Eq. (\ref{e.559}), we have%
\[
(e^{t\Delta /2}f)(x)=f(x)
\]%
for all $x\in M.$  

\item If $f\in L^{p}(M,\mu _{T}^{o})$ is $C^{2},$ real-valued, and
subharmonic, then for all $0\leq t\leq T$ and all $x\in M,$ we have%
\begin{equation}
\int_{M}f(y)\mu _{t}(x,y)V(dy)\geq f(x),  \label{subharmHeat}
\end{equation}%
where the integral on the left-hand side of (\ref{subharmHeat}) is
absolutely convergent. Thus, in the notation of Eq. (\ref{e.559}), we have
\[
e^{t\Delta /2}f(x)\geq f(x)
\]%
for all $x\in M.$  
\end{enumerate}
\end{theorem}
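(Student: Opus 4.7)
The strategy is to reduce both statements to Theorem \ref{t.337} (together with It\^o's formula for the subharmonic case), and then to upgrade the conclusion from almost everywhere to pointwise by means of Gaussian heat-kernel bounds available under $Ric(M)\geq -K$.

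A complete Riemannian manifold with $Ric\geq -K$ is stochastically complete, its heat kernel $\mu_t(x,y)$ is smooth and strictly positive on $(0,\infty)\times M\times M$, and the Li--Yau inequality supplies Gaussian upper and lower bounds for $\mu_t$. These ingredients, together with the $L^p(\mu^o_T)$-hypothesis, verify the hypotheses of Theorem \ref{t.337}, which then yields (\ref{harmonicHeat}) for $\mu^o_{T-t}$-a.e.\ $x$; the same a.e.\ absolute convergence also follows from Theorem \ref{t.1.1}.

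To upgrade to every $x$, I would combine the Li--Yau upper bound $\mu_t(x,y)\leq C(x,t)e^{-c\,d(x,y)^2/t}$ with the corresponding lower bound for $\mu_T(o,y)$ to obtain $\mu_t(x,\cdot)\leq C'(x,t,T)\,\mu_T(o,\cdot)$ outside a compact set, with uniformity for $t$ in any compact subinterval of $(0,T]$. Since $\mu^o_T$ is a probability measure, $L^p(\mu^o_T)\subset L^1(\mu^o_T)$, and this comparison forces absolute convergence of $\int f(y)\mu_t(x,y)V(dy)$ for every $x\in M$ and every $t<T$. The left-hand side is smooth in $x$ (differentiation under the integral), and $\mu^o_{T-t}$ has full topological support (since $\mu_{T-t}(o,\cdot)>0$), so the $\mu^o_{T-t}$-a.e.\ identity extends by continuity to all $x\in M$. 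The endpoint $t=T$ is then recovered by letting $t\uparrow T$ and applying dominated convergence with majorant $C|f(y)|\mu_T(o,y)$.

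For the subharmonic case, It\^o's formula under $P_x$ gives
\[
f(X_t)=f(x)+M_t+\frac{1}{2}\int_0^t\Delta f(X_s)\,ds,
\]
where $M_t$ is a continuous local martingale and the drift is non-negative by subharmonicity. Localizing by stopping times $\tau_n\uparrow\infty$, taking expectations (so the martingale term vanishes), and passing to the limit via monotone convergence for the drift together with the integrability established above, yields $\mathbb{E}_x[f(X_t)]\geq f(x)$, which is exactly (\ref{subharmHeat}). The main obstacle is the pointwise-versus-a.e.\ upgrade together with the endpoint $t=T$: Theorem \ref{t.1.1} alone only gives convergence $\mu^o_{T-t}$-a.e., so the quantitative Li--Yau Gaussian comparison of $\mu_t(x,\cdot)$ with $\mu_T(o,\cdot)$ is the technical heart of the argument; once this is in hand, both the harmonic and the subharmonic conclusions follow by standard martingale manipulations.
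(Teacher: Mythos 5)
Your overall strategy---verifying conditions (A) and (B) of Theorem \ref{t.337} via heat-kernel bounds under $Ric\geq -K$---is the same as the paper's, but two points need repair. First, a misreading: Theorem \ref{t.337} already yields $(e^{tL}f)(x)=f(x)$ for \emph{every} $x\in M$ (its hypotheses are imposed at each $x$ and its proof, via Proposition \ref{p.298}, is pointwise), so your entire ``upgrade from $\mu^o_{T-t}$-a.e.\ to pointwise'' apparatus addresses a non-issue. More seriously, your verification of condition (B) is where the real work lies and you pass over it: to control $\sup_{y\in B(x,n)}|f(y)|$ one needs the Li--Schoen mean value inequality for non-negative subharmonic functions, combined with heat-kernel \emph{lower} bounds, giving growth of order $e^{c_2 n^2/T}$; this must be balanced against the Gaussian decay $e^{-4n^2/(9pt)}$ of $P_x(n-1\leq d(X_t,x)\leq n+1)^{1/p}$, and the product tends to zero only for $t<t_0=4T/(9pc_2)$, which is why Theorem \ref{t.337} (through Proposition \ref{p.299}) must bootstrap from small times to all $t<T$. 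Gaussian bounds alone, without the mean value inequality, do not verify (B). Your separate It\^o argument for the subharmonic case hides the same difficulty: passing from $\mathbb{E}_x[f(X_{t\wedge\tau_n})]\geq f(x)$ to $\mathbb{E}_x[f(X_t)]\geq f(x)$ requires uniform integrability of $f(X_{t\wedge\tau_n})$, i.e.\ exactly the bound on $\bigl(\sup_{B(x,n)}|f|\bigr)P_x(\tau_{B(x,n)}\leq t)^{1/p}$; ``monotone convergence for the drift'' does not supply it.

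Second, the endpoint $t=T$ is a genuine gap. The majorant $C|f(y)|\mu_T(o,y)$, uniform for $t$ near $T$, does not exist: the parabolic Harnack comparison of $\mu_t(x,\cdot)$ with $\mu_T(o,\cdot)$ carries a factor $\exp\bigl(d(x,o)^2/(T-t)\bigr)$ that blows up as $t\uparrow T$, and the crude Gaussian exponents ($e^{-4d^2/9t}$ above versus $e^{-d^2/2T}$ below) point the wrong way once $t>8T/9$. Indeed, even the absolute convergence of $\int_M|f(y)|\,\mu_T(x,y)V(dy)$ for $x\neq o$ is not automatic from $f\in L^p(M,\mu^o_T)$. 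The paper handles this by proving that $f\in L^p(M,\mu^o_T)$ implies $f\in L^r(M,\mu^o_{T+\epsilon})$ for some $1<r<p$ and $\epsilon>0$ (using matched upper and lower Gaussian bounds), and then applying the interior result on the longer interval $[0,T+\epsilon)$, which contains $T$. Some such step is indispensable to reach $t=T$.
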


Note that both statements are obvious at a formal level. After all, \textit{%
if} we could differentiate under the integral sign and integrate by parts
(neither of which is obvious), we would get%
\[
\frac{d}{dt}\int_{M}f(y)\mu _{t}(x,y)V(dy)=\frac{1}{2}\int_{M}(\Delta
f)(y)\mu _{t}(x,y)V(dy).
\]%
\textit{If} we could justify this assertion, then in the harmonic case we
would have that $(e^{t\Delta /2}f)(x)$ is independent of $t,$ whereas in the
subharmonic case we would have that $(e^{t\Delta /2}f)(x)$ increases with $t.
$ In both cases, we expect that $(e^{t\Delta /2}f)(x)$ tends to $f(x)$ as $%
t\rightarrow 0,$ which would establish (\ref{harmonicHeat}) and (\ref%
{subharmHeat}).

However, since functions in $L^{p}(M,\mu^o_{T})$ can
grow very rapidly at infinity, 
it is
not a simple matter to justify differentiating under the integral sign or
integrating by parts. Although it is possible to use bounds on the heat
kernel and its derivatives to justify these steps, we will instead use a
probabilistic argument---which will, of course, still require some heat kernel
bounds.  However, much of our argument is applicable to settings beyond this well-studied case.
\begin{remark}
It should be noted that Proposition 1.8 of Driver \& Gordina \cite{driver_gordina} proves item $(1)$ of Theorem \ref{t.888} in the case where $M$ is a unimodular Lie group with a left-invariant Riemannian metric.  Indeed, this present work can be viewed in part as a generalization of their results.  In Corollary \ref{c.388} below, we also employ their technique of obtaining pointwise bounds on harmonic functions.  We also note that Lemma 5.2 of Driver \& Gross \cite{driver_gross} also gives item $(1)$ when $p=2$ and $f$ is a holomorphic function on a complex Lie group.
\end{remark}
If $f$ is harmonic or $C^2$ and subharmonic, and if $f\in L^p(M,\mu_T^o)$ for some $1<p<\infty$, then Theorem \ref{t.888} indicates that $|f|\leq |e^{(T-t)\Delta/2}f|$ whenever $0\leq t\leq T$. An application of Theorem \ref{t.1.1} immediately gives
\begin{equation}
||f||_{L^p(M,\mu_{t}^o)}\leq ||e^{(T-t)\Delta/2}f||_{L^p(M,\mu_{t}^o)}\leq ||f||_{L^p(M,\mu_T^o)}.
\end{equation}
In Theorem 3.1 of Bakry, Bolley, \& Gentil \cite{bakry_bolley_gentil}, it is shown that for $0\leq t<T$ and $p>1$,
\[||e^{(T-t)\Delta/2}f||_{L^q(M,\mu_t^o)}\leq ||f||_{L^p(M,\mu_T^o)}\]
where $q$ is given by Eq. (\ref{e.1.8}) below.  Thus, we arrive at the following corollary.

\begin{corollary}
\label{c.387}Suppose $f\in L^{p}(M,\mu _{T}^{o})$ for some $1<
p<\infty .$  If $f$ is harmonic or $C^{2},$ non-negative,
and subharmonic, then
\begin{equation}
\left\Vert f\right\Vert _{L^p(M,\mu_s^o)}\leq \left\Vert f\right\Vert_{L^p(M,\mu_t^o)} \leq \left\Vert
f\right\Vert _{L^p(M,\mu_T^o)}  \label{stTineq}
\end{equation}%
for all $0\leq s\leq t\leq T.$

In addition, using the results of \cite{bakry_bolley_gentil}, we also obtain that for all $0\leq t<T,$
\[
\left\Vert f\right\Vert _{L^{q}(M,\mu _{t}^{o})}\leq \left\Vert f\right\Vert
_{L^{p}(M,\mu _{T}^{o})},
\]%
where $q$ is given by%
\begin{equation}
q=1+(p-1)\frac{1-e^{-KT}}{1-e^{-Kt}}.\label{e.1.8}
\end{equation}
\end{corollary}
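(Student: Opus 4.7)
The plan is to extract both inequality chains from a single pointwise bound of the form $|f(x)| \leq (e^{(T-t)\Delta/2}|f|)(x)$, and then apply the dimension-independent contractivity from Theorem \ref{t.1.1} to obtain the $L^p$ statement, or the hypercontractivity result of \cite{bakry_bolley_gentil} to obtain the $L^q$ statement. No new analytic work is needed beyond Theorems \ref{t.1.1} and \ref{t.888}; the corollary is essentially a bookkeeping exercise combining them.

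First I would establish the pointwise inequality $|f(x)| \leq (e^{(T-t)\Delta/2}|f|)(x)$ for every $x\in M$ and every $0\leq t\leq T$. In the harmonic case, part (1) of Theorem \ref{t.888} gives $f(x) = \int_M f(y)\,\mu_{T-t}(x,y)\,V(dy)$, and the triangle inequality applied under the integral sign (equivalently, Jensen's inequality against the probability measure $\mu_{T-t}^x$) yields the stated bound. In the $C^2$, non-negative, subharmonic case, part (2) of Theorem \ref{t.888} gives directly $f(x) \leq (e^{(T-t)\Delta/2} f)(x)$, which is the desired inequality since $f = |f|$; non-negativity is essential here, because Theorem \ref{t.888} only furnishes the one-sided estimate $e^{sL}f \geq f$, which controls $|f|$ only when $f\geq 0$.

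Next I would raise this pointwise inequality to the $p$-th power, integrate against $\mu_t^o$, and apply Theorem \ref{t.1.1} with time parameter $T-t$ (which produces a contraction $L^p(M,\mu_T^o)\to L^p(M,\mu_t^o)$ acting on $|f|$). This gives
\[
\|f\|_{L^p(M,\mu_t^o)} \;\leq\; \bigl\|e^{(T-t)\Delta/2}|f|\bigr\|_{L^p(M,\mu_t^o)} \;\leq\; \bigl\||f|\bigr\|_{L^p(M,\mu_T^o)} \;=\; \|f\|_{L^p(M,\mu_T^o)},
\]
which is the right half of \eqref{stTineq}. The left half $\|f\|_{L^p(M,\mu_s^o)} \leq \|f\|_{L^p(M,\mu_t^o)}$ then follows by rerunning the same argument with $t$ playing the role of $T$ and $s$ playing the role of $t$; this is legitimate because the first step has already placed $f$ in $L^p(M,\mu_t^o)$, so the hypotheses of Theorem \ref{t.888} are available on the shorter interval $[0,t]$.

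Finally, for the $L^q$ conclusion, the identical pointwise bound is used, but the contractivity step is replaced by the hypercontractivity estimate of Theorem 3.1 of \cite{bakry_bolley_gentil}, applicable because $\mathrm{Ric}(M) \geq -K$ implies a curvature-dimension inequality with the constant $K$, and yielding precisely the value of $q$ appearing in \eqref{e.1.8}. I do not anticipate any genuine obstacle: the proof is a direct assembly of previously established ingredients, and the only delicate point to flag is the non-negativity hypothesis needed to convert the one-sided subharmonic bound into a bound on $|f|$.
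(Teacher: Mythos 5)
Your proposal is correct and takes essentially the same route as the paper: Theorem \ref{t.888} gives the pointwise domination $|f|\leq e^{(T-t)\Delta/2}|f|$ (the paper phrases it as $|f|\leq|e^{(T-t)\Delta/2}f|$, an immaterial difference), after which the contraction of Theorem \ref{t.1.1}, respectively the hypercontractivity of \cite{bakry_bolley_gentil}, yields the two conclusions. Your rerun of the argument on the shorter interval $[0,t]$ to obtain the left half of (\ref{stTineq}) is likewise how the paper proceeds (compare part (3) of Theorem \ref{t.337}).
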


Using a Harnack inequality, one can show that for any $0<t<T,$
there is a constant $c_{t,T}$ such that for all $x\in M$%
\[
\mu _{t}(x,o)\leq c_{t,T}\mu _{T}(x,o).
\]%
Such an estimate holds because $\mu _{t}(x,o)$ tends to zero at infinity
more rapidly than $\mu _{T}(x,o).$ Thus, for any $f,$ not necessarily
harmonic or subharmonic, we have%
\begin{equation}
\left\Vert f\right\Vert _{L^p(M,\mu_t^o)}\leq c_{t,T}\left\Vert f\right\Vert _{L^p(M,\mu_T^o)}.
\label{cstIneq}
\end{equation}%
Nevertheless, the constants $c_{t,T}$ are certainly not dimension
independent, and they can even become large for a fixed $M.$

In infinite-dimensional examples where heat kernel measures have been
constructed, it is typically the case that $\mu _{s}^{o}$ and $\mu _{t}^{o}$
are mutually singular whenever $s\neq t.$ In such a setting, there cannot be
any inequality like (\ref{cstIneq}) for general functions. Nevertheless,
since there are no constants in (\ref{stTineq}) depending on the dimension
(or anything else), we may hope that this equality continues to hold in the
infinite-dimensional case, provided we can give the result a suitable
interpretation. We discuss the potential infinite-dimensional applications
of our results in Appendix A.

Corollary \ref{c.387} also allows us to obtain pointwise bounds on harmonic and non-negative
subharmonic functions on a complete Riemannian manifold with Ricci curvature bounded from below using an integrated version of Wang's Harnack inequality \cite{wang_logarithmic,wang_equivalence}, as
derived by Driver \& Gordina (Appendix D of \cite{driver_gordina}).
Specifically, it is shown in \cite{driver_gordina}, that when $Ric\geq -K$, $K\geq 0$, and $f\in L^p(M,\mu_T^o)$, one has pointwise bounds on $|e^{t\Delta/2}f(x)|$ of the form of the right hand side of Eqs. (\ref{e.142}) or (\ref{e.140}) below.  When $f$ is harmonic or $C^2$ non-negative and subharmonic, it is a simple matter to replace $|e^{t\Delta/2}f(x)|$ by $|f(x)|$ using Theorem \ref{t.888} and arrive at the following corollary.  It should be noted that Driver \& Gordina have essentially this result in Corollary 1.9 of \cite{driver_gordina} in the case where $M$ is a unimodular Lie group with a left-invariant Riemannian metric.

\begin{corollary}\label{c.388}
Suppose $M$ is a complete Riemannian manifold with $Ric\geq -K$ for some $K\geq 0$.  Suppose $f$ belongs to $%
L^{p}(M,\mu^o_{T}),$ with $1<p< \infty ,$ and that $f$ is harmonic or $C^2$ and
non-negative subharmonic. Then we have the pointwise bounds%
\begin{equation}\label{e.142}
\left\vert f(x)\right\vert \leq \left\Vert f\right\Vert _{L^{p}(M,\mu^o
_{T})}\exp{\left(\frac{K(p-1)}{2(1-e^{-KT})}d^2(x,o)\right)}
\end{equation}
when $K>0$, and
\begin{equation}\label{e.140}
\left\vert f(x)\right\vert \leq \left\Vert f\right\Vert _{L^{p}(M,\mu^o
_{T})}\exp{\left(\frac{(p-1)}{2T}d^2(x,o)\right)}
\end{equation}
when $K=0$.
\end{corollary}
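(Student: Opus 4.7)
The plan is to reduce the pointwise bound on $|f(x)|$ to the pointwise bound on $|(e^{t\Delta/2}f)(x)|$ already recorded by Driver--Gordina (via Wang's integrated Harnack inequality), using Theorem \ref{t.888} as the bridge. Concretely, the Driver--Gordina result supplies, for any $g\in L^p(M,\mu_T^o)$, an estimate of the form
\[
|(e^{t\Delta/2}g)(x)|\leq \|g\|_{L^p(M,\mu_T^o)}\exp\bigl(C(K,T,p)\,d^2(x,o)\bigr),
\]
where the constant $C(K,T,p)$ is $\tfrac{K(p-1)}{2(1-e^{-KT})}$ when $K>0$ and $\tfrac{p-1}{2T}$ when $K=0$, for the appropriate choice of $t\in (0,T]$ (in fact $t=T$). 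Thus the entire problem becomes: transfer this bound from $e^{t\Delta/2}f$ to $f$ itself.

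First I would handle the harmonic case. Since $f\in L^p(M,\mu_T^o)$ is harmonic, Theorem \ref{t.888}(1) applies and gives $(e^{t\Delta/2}f)(x)=f(x)$ for every $x\in M$ and every $0\leq t\leq T$. Choosing $t=T$ and taking absolute values, the Driver--Gordina pointwise bound above applied to $g=f$ immediately yields (\ref{e.142}) and (\ref{e.140}).

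Next I would handle the $C^2$ non-negative subharmonic case. Non-negativity of $f$ ensures $e^{t\Delta/2}f\geq 0$ (since the heat operator is defined by integration against the probability measure $\mu_t^x$), so $|e^{t\Delta/2}f(x)|=(e^{t\Delta/2}f)(x)$. By Theorem \ref{t.888}(2),
\[
0\leq f(x)\leq (e^{t\Delta/2}f)(x)=|(e^{t\Delta/2}f)(x)|,
\]
so $|f(x)|\leq |(e^{t\Delta/2}f)(x)|$. Applying the Driver--Gordina bound with $t=T$ again produces (\ref{e.142}) and (\ref{e.140}) exactly as in the harmonic case.

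The only non-routine ingredient is the Driver--Gordina/Wang pointwise bound itself, but this is stated (or derivable in the Riemannian setting from Wang's integrated Harnack inequality under $\mathrm{Ric}\geq -K$) in the references cited just before the corollary, and the text explicitly tells us it is available. So the main obstacle is really one of verification rather than proof: one must check that the constants appearing in Wang's Harnack inequality (as packaged in Appendix D of \cite{driver_gordina}) genuinely produce the exponents $\tfrac{K(p-1)}{2(1-e^{-KT})}$ and $\tfrac{p-1}{2T}$ and that the hypothesis $f\in L^p(M,\mu_T^o)$ with $1<p<\infty$ is exactly the regularity needed for that Harnack-type bound. Granting this, the corollary follows immediately from Theorem \ref{t.888}.
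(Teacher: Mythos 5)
Your proposal is correct and follows essentially the same route as the paper: the authors likewise invoke the Driver--Gordina integrated Harnack bound on $|e^{t\Delta/2}f(x)|$ and then use Theorem \ref{t.888} to replace $e^{t\Delta/2}f$ by $f$ (exactly, in the harmonic case, and via $0\leq f\leq e^{t\Delta/2}f$ in the non-negative subharmonic case). The only caveat the paper also leaves implicit is the verification of the precise constants coming from Wang's inequality, which you correctly flag as a matter of checking the cited reference.
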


For our second example, we consider $M=G$, a finite dimensional Lie group.  Suppose $\{Y_1,Y_2,...,Y_d\}\subset \mathfrak{g}=\operatorname{Lie}(G)$ satisfy H\"{o}rmander's condition (see Section \ref{s.3.3} for details).  Let $L$ denote the left-invariant operator
\[L=\frac{1}{2}\sum_{k=1}^d \widetilde{Y_k}^2,\]
where $\widetilde{Y}$ denotes the left-invariant extension of $Y\in \mathfrak{g}$.  Note that $L$ is symmetric with respect to a right-invariant Haar measure $\lambda$.  The following theorem is an application of Theorem \ref{t.337} with details found in Section \ref{s.3.3} below.
\begin{theorem}\label{t.1.4}
Suppose $f\in L^p(G,\mu_T^o)$ for some $1< p<\infty$ and $Lf=0$.  Then for all $0\leq t<T$ we have%
\[
e^{tL}f=f
\]%
and for $0\leq s\leq t<T,$%
\[
\left\Vert f\right\Vert _{L^{p}(G,\mu _{s}^{o})}\leq \left\Vert f\right\Vert
_{L^{p}(G,\mu _{t}^{o})}\leq \left\Vert f\right\Vert _{L^{p}(G,\mu
_{T}^{o})}.
\]
\end{theorem}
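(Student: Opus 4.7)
The plan is to derive Theorem \ref{t.1.4} as an application of Theorem \ref{t.337} (for the identity $e^{tL}f=f$) followed by Theorem \ref{t.1.1} (for the two norm bounds). Once the abstract hypotheses are verified, both claims follow by the same formal manipulations used to pass from Theorem \ref{t.888} to Corollary \ref{c.387}.

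First I would set up the underlying diffusion on $G$. Using the left-invariant vector fields $\widetilde{Y_1},\ldots,\widetilde{Y_d}$ driven by $d$ independent Brownian motions, one solves a Stratonovich SDE on $G$ to produce a Markov process $X_t$ whose generator is $L$. Left-invariance of the coefficients together with H\"ormander's bracket-generating condition yields stochastic completeness of $X_t$, while H\"ormander's hypoellipticity theorem provides a jointly smooth, strictly positive density $\mu_t(x,y)$ of $\mu_t^x$ against the right Haar measure $\lambda$. These features, together with any further heat-kernel regularity required by Theorem \ref{t.337} (which in the left-invariant subelliptic setting is available from standard sub-Riemannian heat-kernel results), suffice to invoke Theorem \ref{t.337} and conclude $e^{tL}f = f$ for every $0\leq t<T$ whenever $f\in L^p(G,\mu_T^o)$ satisfies $Lf=0$.

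Next I would extract the norm inequalities. Given $0\leq s\leq t<T$, I apply Theorem \ref{t.1.1} with the role of ``$T$'' there played by $t$ and the time parameter taken to be $t-s$; this gives that $e^{(t-s)L}$ is a contraction from $L^{p}(G,\mu_{t}^{o})$ to $L^{p}(G,\mu_{s}^{o})$. Combining this contractivity with the identity $e^{(t-s)L}f=f$ from the previous step yields $\|f\|_{L^{p}(G,\mu_{s}^{o})}\leq\|f\|_{L^{p}(G,\mu_{t}^{o})}$. Running the same argument with time parameter $T-t$ and ``$T$'' equal to $T$ gives $e^{(T-t)L}$ as a contraction from $L^{p}(G,\mu_{T}^{o})$ to $L^{p}(G,\mu_{t}^{o})$, which together with $e^{(T-t)L}f=f$ delivers the second inequality $\|f\|_{L^{p}(G,\mu_{t}^{o})}\leq\|f\|_{L^{p}(G,\mu_{T}^{o})}$.

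The principal obstacle is the verification of the hypotheses of Theorem \ref{t.337}. Unlike the Riemannian case treated in Section \ref{s.3.2}, the natural control distance here is the Carnot--Carath\'eodory distance, so the heat-kernel bounds used to validate the probabilistic (local martingale and uniform integrability) arguments behind Theorem \ref{t.337} must be drawn from the subelliptic literature rather than from Li--Yau-type Riemannian estimates. Once those inputs are secured, no genuinely new ideas beyond those already in play for Theorem \ref{t.888} should be required, and the conclusions of Theorem \ref{t.1.4} follow at once.
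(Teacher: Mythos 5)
Your overall architecture is the paper's: realize $L$ as the generator of the Stratonovich SDE driven by the $\widetilde{Y_k}$, verify the hypotheses of Theorem \ref{t.337} to get $e^{tL}f=f$, and then chain this identity with the contractivity of Theorem \ref{t.1.1} (equivalently, invoke statement (3) of Theorem \ref{t.337}) to obtain the two norm inequalities. That part of your argument is sound, including the bookkeeping of which time plays the role of ``$T$'' in each application.

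The gap is that you have deferred, rather than carried out, the verification of conditions (A) and (B) of Theorem \ref{t.337}, and this verification is essentially the entire content of the paper's proof. Condition (A) does follow routinely from the subelliptic parabolic Harnack inequality (Proposition \ref{p.347}), much as in the Riemannian case. Condition (B), however, requires a sup-over-balls estimate for $L$-harmonic functions in terms of $\Vert f\Vert_{L^1(G,\mu_T^x)}$, and here the Riemannian template does not transfer directly: there is no Li--Schoen mean value inequality available, so the paper instead uses hypoellipticity of $L$ to get a local $L^1$ bound $|f(e)|\leq D_0\Vert f\Vert_{L^1(B(e,\alpha),\lambda)}$ at the identity and then left-translates it. Because $\lambda$ is \emph{right}-invariant Haar measure while $L$ is \emph{left}-invariant, this translation introduces the modular function $m(g^{-1})$, and one must separately prove that $m(g^{-1})$ grows at most exponentially in the horizontal distance $d_H(e,g)$ (done by decomposing a horizontal path into unit-length segments and using that $m$ is a homomorphism). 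This step, together with matching the Gaussian-type upper and lower bounds of Proposition \ref{p.346} so that the exponent $\frac{C}{T}-\frac{1}{4pt}$ is negative for $t$ small, is where the proof actually lives; your statement that ``no genuinely new ideas beyond those already in play for Theorem \ref{t.888} should be required'' understates what must be supplied. You should also record explicitly why the process is uniformly local (here it is immediate from left-invariance, as in Example \ref{e.4.33}, so it suffices to check positivity of $\int_{B(x,1)}\mu_s(x,y)\lambda(dy)$ at a single point).
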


In certain cases, the Lie group $G$ and the operator $L$ will satisfy a generalized curvature-dimension inequality.  With the above setup in mind, $G$ is said to satisfy the generalized curvature-dimension inequality $CD(\rho_1,\rho_2,\kappa,d)$ if there exist constants $\rho_1\in \mathbb{R}$, $\rho_2>0$, $\kappa\geq 0$, and $d\geq 1$ such that
\[\Gamma_2(f,f)+\nu\Gamma_2^Z(f,f)\geq \frac{1}{d}(Lf)^2+(\rho_1-\frac{\kappa}{\nu})\Gamma(f,f)+\rho_2\Gamma^Z(f,f)\]
for all $\nu>0$ and $f\in C^\infty(G)$, where
\[\Gamma(f,g)=\sum_{i=1}^d (\widetilde{Y}_if)(\widetilde{Y}_ig),\]
\[\Gamma_2(f,f)=\frac{1}{2}(L\Gamma(f,f)-2\Gamma(f,Lf)),\]
$\Gamma^Z(f,g)$ is a first-order differential bilinear form involving differentiation in the `vertical' directions and satisfying certain assumptions (see Section 1 of \cite{BG1} for more details), and
\[\Gamma_2^Z(f,g)=\frac{1}{2}(L\Gamma^Z(f,f)-2\Gamma^Z(f,Lf)).\]
Curvature-dimension inequalities apply in very general sub-Riemannian settings, however, we will only use the application to the Lie group setting described above. It is shown in Section 2 of \cite{BG1} that the 3-dimensional Lie groups $SU(2)$, $H^1$ (the Heisenberg group), and $SL(2,\mathbb{R})$, endowed with natural subriemannian structures, satisfy the generalized curvature dimension inequality $CD(\rho_1,\frac{1}{2},1,2)$ for $\rho_1=1,0,$ and $-1$ respectively.  Note that if $G$ satisfies the generalized curvature-dimension inequality $CD(\rho_1,\rho_2,\kappa,d)$ for $d<\infty$, then it will also satisfy the generalized curvature-dimension inequality $CD(\rho_1,\rho_2,\kappa,\infty)$.

The notion of a generalized curvature-dimension inequality was introduced by Baudoin \& Garofalo in \cite{BG1}, and such inequalities have been shown to imply a wide variety of heat kernel estimates (see \cite{BG1,BBlog,BB2}).  Of particular relevance is the existence of a dimension-independent Harnack inequality of the type first considered by Wang and described above (see Proposition 3.4 of \cite{BBlog}).

\begin{corollary}\label{c.389}
Suppose $G$ satisfies the generalized curvature-dimension inequality $CD(\rho_1,\rho_2,\kappa,\infty)$ for some $\rho_1\in\mathbb{R}$, $\rho_2>0$, and $\kappa\geq 0$.  Suppose $f$ belongs to $%
L^{p}(G,\mu^o_{T}),$ with $1<p< \infty ,$ and that $Lf=0$.  Then we have the pointwise bounds%
\begin{equation}\label{e.142}
\left\vert f(x)\right\vert \leq \left\Vert f\right\Vert _{L^{p}(M,\mu^o
_{T})}\exp{\left(\frac{p}{p-1}\left(\frac{1+\frac{2\kappa}{\rho_2}+2\rho_1^-t}{4t}d^2(x,o)\right)\right)},
\end{equation}
where $\rho_1^-=\max{(-\rho_1,0)}$.
\end{corollary}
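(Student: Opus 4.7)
The plan is to mirror the proof of Corollary \ref{c.388}, substituting sub-Riemannian ingredients for the Riemannian ones. Two inputs suffice: (i) the dimension-independent Wang-type Harnack inequality for $e^{tL}$ that follows from $CD(\rho_1,\rho_2,\kappa,\infty)$, established as Proposition 3.4 of \cite{BBlog}, and (ii) the identity $(e^{tL}f)(x)=f(x)$ together with the monotonicity $\|f\|_{L^p(G,\mu_t^o)}\leq\|f\|_{L^p(G,\mu_T^o)}$ for harmonic $f$, both supplied by Theorem \ref{t.1.4}.

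Concretely, I would first apply Proposition 3.4 of \cite{BBlog} with $|f|$ in place of $f$ to obtain
\[
(e^{tL}|f|)^p(x)\ \leq\ (e^{tL}|f|^p)(y)\,\exp\!\left(C(t)\,d^2(x,y)\right),
\]
where $C(t)$ is the explicit Wang exponent produced by the generalized curvature-dimension computation. Specializing to $y=o$ and using $(e^{tL}|f|^p)(o)=\|f\|_{L^p(G,\mu_t^o)}^p$ together with the pointwise inequality $|e^{tL}f|\leq e^{tL}|f|$, one obtains
\[
|(e^{tL}f)(x)|\ \leq\ \|f\|_{L^p(G,\mu_t^o)}\,\exp\!\left(\tfrac{1}{p}C(t)\,d^2(x,o)\right).
\]
The monotonicity in $t$ from Theorem \ref{t.1.4} upgrades $\|f\|_{L^p(G,\mu_t^o)}$ on the right to $\|f\|_{L^p(G,\mu_T^o)}$, and the identity $(e^{tL}f)(x)=f(x)$ replaces the left-hand side by $|f(x)|$, yielding a bound of precisely the stated shape.

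The main obstacle is not conceptual but bookkeeping: one must verify that $\tfrac{1}{p}C(t)$ from Proposition 3.4 of \cite{BBlog} equals $\tfrac{p}{p-1}\cdot\tfrac{1+2\kappa/\rho_2+2\rho_1^- t}{4t}$. A fully equivalent alternative, closer to Appendix D of \cite{driver_gordina}, is to divide the original Wang inequality through by $\exp(C(t)d^2(x,y))$ and integrate in $y$ against $\mu_{T-t}^o$, using the Chapman--Kolmogorov identity $\int_G (e^{tL}|f|^p)(y)\,\mu_{T-t}^o(dy)=\|f\|_{L^p(G,\mu_T^o)}^p$ to remove the semigroup on the right-hand side; this route delivers the $\mu_T^o$-norm directly, bypassing the monotonicity step but requiring a lower bound on a Gaussian-type integral against $\mu_{T-t}^o$.
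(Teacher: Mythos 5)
Your proposal is correct and follows the same route the paper (implicitly) takes: Corollary \ref{c.389} is obtained by combining the dimension-independent Wang-type Harnack inequality of Proposition 3.4 of \cite{BBlog} with the identity $e^{tL}f=f$ and the norm monotonicity supplied by Theorem \ref{t.1.4}, in exact analogy with Corollary \ref{c.388}, where the Driver--Gordina integrated Harnack inequality plays the role of your alternative route. The only remaining task is the constant bookkeeping against Proposition 3.4 of \cite{BBlog}, which you correctly isolate as the one point to verify.
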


\section{A general contractivity result}

Let $(X,\Omega )$ and $(Y,\Xi )$ be measurable spaces and $\nu _{1}$ a
finite measure on $X.$ Suppose that $\{\mu^x\}_{x\in X}$ is a family of
probability measures on $(Y,\Xi )$ depending measurably on $x\in X,$ i.e. for all $E\in \Xi$ the function $x\rightarrow \mu^x(E)$ is $\Omega$-measurable.  Such a collection of measures is known as a \textit{Markov kernel}.  Further information on Markov kernels can be found in Chapter VIII of \cite{bauer}.  Let $\nu _{2}$ be the measure on $(Y,\Xi )$ defined by%
\begin{equation}\label{e.399}
\nu _{2}(E):=\int_{X}\mu^x(E)\nu _{1}(dx).
\end{equation}
Then $\nu _{2}$ is also a finite measure, with $\nu _{2}(Y)=\nu _{1}(X).$  The main result of this section is the following theorem.

\begin{theorem}\label{t.558}
For any $1\leq p\leq\infty$, the map $A$ given by%
\begin{equation}
(Af)(x)=\int_{Y}f(y)\mu^x(dy)  \label{af}
\end{equation}%
is a well-defined map from $L^{p}(Y,\nu _{2})$ into the space of equivalence
classes of $\nu _{1}$-almost everywhere equal functions on $X$.  This means
that given an equivalence class in $L^{p}(Y,\nu _{2}),$ the
integral in (\ref{af}) converges for $\nu _{1}$-almost every $x,$ and that the value of the integral
is independent of the choice of representative, up to a set of $\nu _{1}$%
-measure zero. Furthermore,
\begin{equation}\label{contractivityp}
||Af||_{L^p(X,\nu_1)}\leq ||f||_{L^p(Y,\nu_2)}.
\end{equation}
Thus, $A$ is a contraction of $L^{p}(Y,\nu _{2})$ into $L^{p}(X,\nu _{1}).$
\end{theorem}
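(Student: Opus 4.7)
The plan is to prove the theorem in the order: convergence of the integral defining $Af$ for $\nu_1$-a.e.\ $x$, measurability of $Af$, independence of the choice of representative, and finally the contractivity bound. The main tool throughout is Fubini's theorem combined with Jensen's inequality, exploiting the fact that each $\mu^x$ is a probability measure.

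First I would establish the case $f \geq 0$ measurable. By the defining hypothesis on Markov kernels, $x \mapsto \mu^x(E)$ is $\Omega$-measurable for every $E \in \Xi$, so $x \mapsto (A\chi_E)(x) = \mu^x(E)$ is measurable. By linearity, $Af$ is measurable for every non-negative simple function $f$, and by monotone convergence the same is true for any non-negative $\Xi$-measurable $f$, with
\[
\int_X (Af)(x)\, \nu_1(dx) = \int_X \int_Y f(y)\, \mu^x(dy)\, \nu_1(dx) = \int_Y f(y)\, \nu_2(dy),
\]
where the last equality is precisely the definition of $\nu_2$ applied to simple $f$ and extended by monotone convergence. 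This identity gives the $p=1$ case directly, and also shows that if $f \in L^p(Y,\nu_2)$ then $\int_Y |f|^p\, d\mu^x < \infty$ for $\nu_1$-a.e.\ $x$.

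Next I would prove the $1 < p < \infty$ contractivity. For $\nu_1$-a.e.\ $x$, since $\mu^x$ is a probability measure and $|f|$ is $\mu^x$-integrable (by Hölder's inequality applied to $|f|$ and $1$ against the finite measure $\mu^x$, using the bound on $\int |f|^p\, d\mu^x$), the integral $(Af)(x) = \int_Y f\, d\mu^x$ is absolutely convergent. Then Jensen's inequality for the convex function $t \mapsto |t|^p$ against the probability measure $\mu^x$ yields
\[
|(Af)(x)|^p \leq \int_Y |f(y)|^p\, \mu^x(dy)
\]
for $\nu_1$-a.e.\ $x$. Integrating against $\nu_1$ and applying Fubini (valid since the integrand is non-negative) gives
\[
\int_X |(Af)(x)|^p\, \nu_1(dx) \leq \int_X \int_Y |f|^p\, d\mu^x\, \nu_1(dx) = \int_Y |f|^p\, d\nu_2,
\]
which is (\ref{contractivityp}). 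The case $p=\infty$ is immediate from $|\int f\, d\mu^x| \leq \|f\|_\infty \mu^x(Y) = \|f\|_\infty$, combined with the fact that $\nu_2$-essential bounds transfer to $\mu^x$-essential bounds for $\nu_1$-a.e.\ $x$ (again by Fubini applied to the indicator of the set where $|f|$ exceeds $\|f\|_\infty$).

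Finally, independence of the representative follows from the same Fubini identity: if $f_1 = f_2$ $\nu_2$-a.e., then $\int_Y |f_1 - f_2|\, d\nu_2 = 0$, so $\int_Y |f_1 - f_2|\, d\mu^x = 0$ for $\nu_1$-a.e.\ $x$, whence $Af_1(x) = Af_2(x)$ off a $\nu_1$-null set. There is no serious obstacle here; the only delicate point is the measurability of $Af$ for non-simple $f$, which I expect to be the subtlest step and which is handled cleanly via monotone approximation from indicators of measurable sets, where measurability of $x \mapsto \mu^x(E)$ is postulated.
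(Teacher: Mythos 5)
Your proposal is correct and follows essentially the same route as the paper's proof: establish the identity $\int_X Af\,d\nu_1=\int_Y f\,d\nu_2$ on indicators, extend to simple and then non-negative functions by monotone convergence (which also settles measurability of $Af$), and then combine Jensen's inequality on each probability measure $\mu^x$ with that identity to get absolute convergence $\nu_1$-a.e., well-definedness on equivalence classes, and the contraction bound. The only (immaterial) difference is at $p=\infty$, where you argue directly that $\nu_2$-null sets are $\mu^x$-null for $\nu_1$-a.e.\ $x$, whereas the paper deduces the $L^\infty$ bound by letting $p\to\infty$ in the finite-$p$ estimate.
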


\begin{remark}\label{r.3.3.1}
The measures $\mu^x$ do not in general have to be
absolutely continuous with respect to $\nu _{2}.$ It is even possible for $\mu^x$ to be singular with respect to $\nu _{2}$ for \textit{%
every} $x$ in $X.$ Thus, for a fixed $x,$ two functions equal $\nu _{2}$%
-almost everywhere may not be equal $\mu^x$-almost everywhere. As a
result, for a fixed $x,$ the integral in (\ref{af}) may not be independent
of the choice of representative. However, if $f_{1}$
and $f_{2}$ are equal $\nu _{2}$-almost everywhere, then $%
\int_{Y}f_{1}(y)\mu^x(dy)$ agrees with $\int_{Y}f_{2}(y)\mu^x(dy)$
\textit{for }$\nu _{1}$\textit{-almost every }$x.$
\end{remark}

\begin{remark}\label{r.3.3.8}
In the event that there exists a reference measure $\lambda$ on $(Y,\Xi)$ and a jointly measureable non-negative density $m$ on $X\times Y$ such that
\[\mu^x(E)=\int_E m(x,y)\lambda(dy),\]
for all $E\in\Xi$, then $x\rightarrow \mu^x(E)$ is $\Omega$-measurable for all $E\in \Xi$ by Tonelli's Theorem.  
\end{remark}

\begin{proof}
If $E\in \Xi$ and $f(y)=1_{E}(y)$, then $(Af)(x)=\mu^x(E)$ is measurable by our assumption on the collection of measures $\{\mu^x\}_{x\in X}$.  Note as well that
\[
\int_X Af(x)\nu_1(dx)=\int_X \mu^x(E)\nu_1(dx)=\nu_2(E)=\int_Y f(y)\nu_2(dy).
\]
Using the linearity of $A$ and the integral, the equality
\begin{equation}\label{e.001}
\int_X Af(x)\nu_1(dx)=\int_Y f(y)\nu_2(dy)
\end{equation}
holds for any simple function $f$ as well.

Now suppose $f$ is a measurable function on $Y$ with values in $[0,+\infty]$.  Let $\{f_n\}_{n=1}^\infty$ be a sequence of non-negative simple functions which increase monotonically to $f$ everywhere.  Note that $\{Af_n\}_{n=1}^\infty$ is a monotone sequence of non-negative measurable functions on $X$.  By monotone convergence, for any $x\in X$, $(Af)(x)=\lim_{n\rightarrow \infty}(Af_n)(x)$.  So $Af$ is measurable since it is the pointwise limit of measurable functions.  Furthermore, using monotone convergence twice and the result above for simple functions,
\begin{align*}\int_X Af(x)\nu_1(dx)&=\int_X\left(\lim_{n\rightarrow \infty}(Af_n)(x)\right)\nu_1(dx)\\&=\lim_{n\rightarrow\infty}\int_X (Af_n)(x)\nu_1(dx)\\&=\lim_{n\rightarrow\infty}\int_Y f_n(y)\nu_2(dy)\\&=\int_Y f(y)\nu_2(dy).
\end{align*}
So (\ref{e.001}) holds for all non-negative measurable functions $f$, where we allow both sides to be $+\infty$.

Suppose now that $f$ is a measurable complex-valued function on $Y$ with the property that
\begin{equation}
\int_{Y}\left\vert f(y)\right\vert ^{p}\nu _{2}(dy)<+\infty
\label{pfinite2}
\end{equation}%
for some $1\leq p<\infty$.  For fixed $x\in X$, Jensen's inequality gives
\begin{equation}
\left(A|f|(x)\right)^p=\left(\int_Y |f|\mu^x(dy)\right)^p\leq \int_Y |f|^p\mu^x(dy)= A|f|^p(x)\label{e.92370}
\end{equation}
(Although Jensen's inequality is usually stated for integrable functions, if $f \in L^p(Y,\mu^x)$, then $f \in L^1(Y,\mu^x)$. Thus, (\ref{e.92370}) is always valid, although both sides of the inequality may have the value $+\infty$).

Eq. (\ref{e.92370}) and (\ref{e.001}) then give
\begin{equation}
\int_X\left(A|f|(x)\right)^p \nu_1(dx)\leq \int_X A|f|^p(x)\nu_1(dx)=\int_Y |f(y)|^p\nu_2(dy)<+\infty,\label{e.92371}
\end{equation}
which implies that $A|f|(x)<\infty$ for $\nu _{1}$-almost every $x.$  Thus, the quantity $Af(x)$, as defined by (\ref{af}), is well defined (absolutely convergent) for $\nu _{1}$-almost every $x$.  For all $x$ such that $A|f|(x)<\infty$, we can write $Af(x)=Af_1(x)-Af_2(x)+i(Af_3(x)-Af_4(x))$, where each $f_i$ is non-negative and measurable.  
Hence, $Af$ is measurable since it is the sum of measurable functions on a measurable set.  Furthermore, since $|Af|\leq |f|$, (\ref{e.92371}) implies that
\begin{equation}
\int_X\left|Af(x)\right|^p \nu_1(dx)\leq \int_Y |f(y)|^p\nu_2(dy).\label{e.92372}
\end{equation}

We see, then, that we have a well-defined map $A$ taking a function $f$ on $%
Y $ satisfying (\ref{pfinite2}) and producing a function $Af$ on $X$ defined
$\nu _{1}$-almost everywhere and given by%
\[
(Af)(x)=\int_{Y}f(y)\mu^x(dy).
\]%
The space of functions satisfying (\ref{pfinite2}) is a vector space and the
map $A$ is linear. Furthermore, if $f(y)=0$ for $\nu _{2}$%
-almost every $y,$ then by (\ref{e.92372}) we see that $(Af)(x)$ will be
zero for $\nu _{1}$-almost every $x.$ Thus, if $f_{1}(y)=f_{2}(y)$ for $\nu
_{2}$-almost every $y,$ then $(Af_{1})(x)=(Af_{2})(x)$ for $\nu _{1}$-almost
every $x.$ Thus, we may interpret $A$ as a map from $L^{p}(Y,\nu _{2})$
to $L^{p}(X,\nu _{1}),$ where as usual the elements of $L^{p}$ are \textit{%
equivalence classes} of functions equal almost everywhere.  Eq. (\ref{e.92372}) implies the contractivity statement (\ref{contractivityp}) for $1\leq p<\infty$.

Finally, if $f\in L^\infty(Y,\nu_2)$, then $f\in L^p(Y,\nu_2)$ for all $p<\infty$, and the above argument implies that $||Af||_{L^p(X,\nu_1)}\leq ||f||_{L^p(Y,\nu_2)}$ for all $1\leq p<\infty$.  Taking $p\rightarrow\infty$ on both sides of this inequality yields the result for $p=\infty$ as well.
\end{proof}

For our purposes, the most useful application of the theorem above occurs in the following setting.  We are given a measurable space $(X,\Omega )$ and a semigroup of probability measures $p (s,t,x,\cdot)$ on $X$. We may think of these measures as the
transition probabilities of an $X$-valued Markov process, i.e. $p (s,t,x,E)$
represents the probability that the process will belong to $E$ at time $t,$
given that it is at the point $x$ at time $s.$ The semigroup property means
that for any set $E\in \Omega $ and $r\leq s\leq t$ we have%
\[
\int_{X}p (r,s,y,dx)p (s,t,x,E)=p (r,t,y,E).
\]%
We now choose a basepoint $o\in X$, a \textquotedblleft
basetime\textquotedblright\ $t_{0},$ and two other times $s$ and $t$ with $%
t_{0}<s<t.$ We can construct an example of the preceding framework as
follows. Set $Y=X$, $\nu _{1}(\cdot )=p (t_{0},s,o,\cdot )$,
and
\[\mu^x (\cdot )=p (s,t,x,\cdot ).\]
The Markov property implies that%
\begin{equation}
\int_X \mu^x (E)~\nu _{1}(dx)=\int_X p (s,t,x,E)p (t_{0},s,o,dx)=p
(t_{0},t,o,E),\label{e.markov}
\end{equation}
and so $\nu _{2}(\cdot )=p (t_{0},t,o,\cdot ).$

In order to apply Theorem \ref{t.558}, is it necessary to show that $x\rightarrow \mu^x(E)$ is $\Omega$-measurable for any $E\in \Xi$.  As Remark \ref{r.3.3.8} indicates, this is obvious when the collection of measures $\{\mu^x\}_{x\in X}$ has a density with respect to a fixed reference measure $\lambda$.  For example, if $X=Y$ is a normed vector space and $\lambda$ is a finite measure on $(X,\mathcal{B}(X))$, then
\[x\rightarrow\int_Y 1_E(x+y)\lambda(dy)\]
is measurable.  Hence the collection of measures $\{\mu^x\}_{x\in X}$, where
\[\mu^x(E)=\int_Y 1_E(x+y)\lambda(dy),\]
satisfies the hypotheses of Theorem \ref{t.558}.  In this case, the measure $\nu_2$ defined by Eq. (\ref{e.399}) is the convolution of $\lambda $ and $\nu_1$.

The first specific example is the context of Section 3.  The second example can be thought of an infinite dimensional example of the first and underscores the comments of Remark \ref{r.3.3.1}.
\begin{example}\label{e.887}
Let $\{X_t\}_{t\geq 0}$ denote a continuous-time Markov process on a smooth manifold $M$.  Fix $t>0$ and let $\{\mu_t^x\}_{x\in M}$ denote the collection of heat kernel measures on $M$.  Suppose that $x\rightarrow \mu_t^x(E)$ is measureable for all $E\in \mathcal{B}(M)$.  Remark \ref{r.3.3.8} indicates that this will be the case when the heat kernel measures are absolutely continuous with respect to a fixed reference measure, for example.  Now set $X=Y=M$ and consider $\nu_1=\mu_{T-t}^o$ for some $0\leq t<T$ and fixed $o\in M$.  Then the Markov property indicates
\[\nu_2(E)=\int_M\mu_t^x(E)\mu_{T-t}^o(dx)=\mu_T^o(E),\]
i.e. $\nu_2=\mu_T^o$.  By Theorem \ref{t.558}, the map
\[(Af)(x)=\int_M f(y)\mu_t^x(dy)\]
is a contraction from $L^p(M,\mu_T^o)$ to $L^p(M,\mu_{T-t}^o)$ for all $1\leq p\leq \infty$.
\end{example}

\begin{example}\label{e.3.3}
For any $t>0$, let $(W,H,\mu_t)$ denote an abstract Wiener space where $\mu_t$ is a Gaussian measure with variance $t$.  We can define the collection of measures $\{\mu_t\}_{t\geq 0}$ on $\mathcal{B}(W)$ by their characteristic functionals: for all $u\in W^*$,
\[\hat{\mu_t}(u):=\int_W e^{iu(w)}\mu_t(dw)=e^{-\frac{t}{2}(u,u)},\]
where $(\cdot,\cdot)$ is an inner product on $W^*$.  Note that for any $s,t>0$, $\mu_s\ast\mu_t=\mu_{s+t}$.

Set $X=Y=W$ and $\Omega=\Xi=\mathcal{B}(W)$.  Fix two times $0\leq t<T$ and set $\nu_1=\mu_{T-t}$ and
\[\mu^x(E)=\int_W1_{E}(x+y)\mu_{t}(dy)\]
for any $E\in \mathcal{B}(W)$ and $x\in W$.  Then $\mu^x$ is measurable in $x$ and  $\nu_2=\mu_{T-t}\ast\mu_t=\mu_T$.  It follows that for any $f\in L^p(W,\mu_T)$, the function
\[(Af)(x)=\int_W f(x+y)\mu_t(dy),\]
is defined for $\mu_{T-t}$-a.e. $x\in W$ and is in $L^p(W,\mu_{T-t})$.

If $f_1$ and $f_2$ are both representatives of the same $L^p(W,\mu_t)$ equivalence class, then the Cameron--Martin Theorem states that $(Af_1)(x)=(Af_2)(x)$ for all $x\in H$.  If, on the other hand, $f_1$ and $f_2$ are both representatives of the same $L^p(W,\mu_T)$ equivalence class, then Theorem \ref{t.558} indicates that $(Af_1)(x)=(Af_2)(x)$ for $\mu_{T-t}$-almost every $x\in W$.
\end{example}

The setup and hypotheses of Theorem \ref{t.558} are general enough to apply to many examples beyond the semigroups of probability measures described above.  For other examples of Markov kernels, see \cite{bauer}.

\section{Harmonic and Subharmonic Functions}

When $X=Y$ in Theorem \ref{t.558}, the operator $A$ is a map between functions defined on the same space.  In this case, it is possible for $A$ to fix some class of functions.  If $f$ is a function on $X$ such that $\int_X |f(y)|^p\nu_2(dy)<\infty$ for some $1\leq p\leq\infty$ and if $Af=f$ $\nu_1$-a.s., then Theorem \ref{t.558} immediately gives
\begin{equation}\label{e.490}
||f||_{L^p(X,\nu_1)}=||Af||_{L^p(X,\nu_1)}\leq ||f||_{L^p(X,\nu_2)}.
\end{equation}
When $A$ belongs to a semigroup of operators, $A=e^{tL}$ for some $t>0$ and some operator $L$, it is reasonable to expect that a function $f$ in the domain of $L$ satisfies (\ref{e.490}) if it is $L$-harmonic ($Lf=0$).  The subject of the following subsection is to prove this statement under some additional assumptions when the operator $L$ is the generator of a diffusion.  We then show that these conditions are satisfied in two examples: $L=\frac{1}{2}\Delta$, the Laplace--Beltrami operator on a Riemannian manifold with Ricci curvature bounded from below, and $L$ a subelliptic invariant operator on a Lie group satisfying H\"{o}rmander's condition.

\subsection{Diffusion Processes}

Throughout this section, we will assume that $M$ is a smooth manifold and $X=\{X_t\}_{t\geq 0}$ an $M$-valued diffusion with infinitesimal generator $L$ defined on the filtered probability space $(\Omega,P,\mathcal{F}_t)$.   This means that $\{X_t\}_{t\geq 0}$ is a time-homogeneous strong Markov process with continuous sample paths such that for any $f\in C^2(M)$,
\begin{equation}\label{e.143}
M_t^f:=f(X_t)-\int_0^t(Lf)(X_s)ds
\end{equation}
is a local martingale for $t<e(X)$, where $e$ denotes the explosion time.  We will assume that $e(X)=\infty$ a.s., i.e. $M$ is stochastically complete.  Let $\{\mu_t^x\}_{x\in M}$ denote the transition probabilities of this process, $\mu_t^x(E)=P_x(X_t\in E)$, where $P_x$ denotes the probability with respect to the initial distribution $X_0=x$ almost surely (this process will be denoted $\{X_t^x\}_{t\geq 0}$ when necessary).  Similarly, we will use $\mathbb{E}_x[f(X_t)]$ to denote expectation of $f(X_t)$, where $X_0=x$ almost surely; more generally, $x$ can be replaced by any initial distribution.  As before, we define the heat operator
\[(e^{tL}f)(x):=\int_M f(y)\mu_t^x(dy)=\mathbb{E}_x[f(X_t)].\]
As shown in Example \ref{e.887}, $e^{tL}$ is a contraction from $L^p(M,\mu_T^o)$ to $L^p(M,\mu_{T-t}^o)$ for any $o\in M$, $1\leq p\leq \infty$ and $0\leq t<T$.

A function $f\in C^2(M)$ is $L$-harmonic if $Lf=0$ and is $L$-subharmonic if $Lf\geq 0$.  If $L$ is hypoelliptic (as will be the case in the two examples that follow), then a function that is $L$-harmonic in the distributional sense will automatically be smooth; however, the same cannot be said of an $L$-subharmonic function.  Eq. (\ref{e.143}) implies that $\{f(X^x_t)\}_{t\geq 0}$ is a local martingale for all $x\in M$ if $f$ is $C^2$ and $L$-harmonic, while $\{f(X^x_t)\}_{t\geq 0}$ is a local submartingale if $f$ is $C^2$ and $L$-subharmonic.

In the sequel we will be careful to make a distinction between a function $f\in C^2(M)$ which is $L$-harmonic or $L$-subharmonic $f$ and satisfies the integrability condition
\[\int_M |f(y)|^p\mu_T^o(dy)<\infty\]
and its $L^p(M,\mu_T^o)$ equivalence class.  We will say $f\in L^p(M,\mu^o_T)$ is $C^2$ and $L$-harmonic (or $L$-subharmonic) if there exists a $C^2$ $L$-harmonic ($L$-subharmonic) representative in $L^p(M,\mu_T^o)$.

Let $d:M\times M\rightarrow \mathbb{R}_+$ denote a metric on $M$ such that closed balls
\[B(x,r)=\{y\in M| d(x,y)\leq r\}\]
are compact.  Let $\tau_{B(x,r)}$ denote the first exit time of $X$ from $B(x,r)$.  Note that since $M$ is stochastically complete, $P_x(\tau_{B(x,r)}\leq t)\rightarrow 0$ as $r\rightarrow\infty$ for any $x\in M$ and $t\geq 0$.

\begin{definition}
A process $\{X_t\}_{t\geq 0}$ is uniformly local with parameter $\delta>0$ if there exists $t,\epsilon >0$ such that for all $x\in M$,
\begin{equation}\label{e.994}
P_x(d(x,X_s)\leq\delta)\geq \epsilon
\end{equation}
whenever $0\leq s<t$.
\end{definition}
\begin{example}\label{e.4.33}
Suppose $\{X_t\}_{t\geq 0}$ is a left(right)-invariant strong Markov process on a Lie group $M$ endowed with left(right)-invariant metric $d$.  Then if $P_o(d(o,X_s)\leq\delta)\geq \epsilon$ for one particular $o\in M$, then $P_x(d(x,X_s)\leq\delta)\geq \epsilon$ for all $x\in M$.  In other words, $\{X_t\}_{t\geq 0}$ is uniformly local with parameter $\delta$ if (\ref{e.994}) is satisfied at any one point.
\end{example}
\begin{example}\label{e.4.34}
Suppose $\{X_t\}_{t\geq 0}$ is a Brownian motion on a Riemannian manifold of dimension $n$ with $\operatorname{Ric}(M)\geq -K$ for some $K\geq 0$.  Let $d$ denote the Riemannian distance.  Roughly speaking, a lower bound on the Ricci curvature of $M$ gives an upper bound on the rate of escape of Brownian motion.  One precise formulation of this idea is Theorem 3.6.1 of Hsu \cite{hsu_book}:
\begin{theorem}
Suppose $L\geq 1$ and $M$ is a Riemannian manifold of dimension $n$ with $Ric(z)\geq -L^2$ for all $z\in B(x,1)$.  Then there exists a constant $C$ depending only on $n$ such that
\[P_x(\tau_{B(x,1)}\leq \frac{C}{L})\leq e^{-L/2}.\]
\end{theorem}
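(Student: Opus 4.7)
The plan is to analyze the radial process $r_t = d(X_t, x)$ via Itô's formula on manifolds, invoke the Laplacian comparison theorem under the Ricci lower bound, and construct an exponential supermartingale from which the probability bound follows by optional stopping.

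First I would record the radial setup. Off the cut locus of $x$, the Laplacian comparison theorem under $\operatorname{Ric}(z) \geq -L^2$ for $z \in B(x,1)$ produces a bound of the form
\begin{equation}
\Delta d(\cdot,x)(z) \leq c_n\bigl(L + d(z,x)^{-1}\bigr), \qquad z \in B(x,1),
\end{equation}
for a constant $c_n$ depending only on $n$. Kendall's Itô formula for the distance function then reads, for any $C^2$ nondecreasing $\phi : [0,1] \to \mathbb{R}$,
\begin{equation}
d\phi(r_t) = \phi'(r_t)\,d\beta_t + \tfrac12\bigl[\phi''(r_t) + \phi'(r_t)\Delta d(\cdot,x)(X_t)\bigr]dt - \phi'(r_t)\,dA_t,
\end{equation}
where $\beta_t$ is a one-dimensional Brownian motion and $A_t$ is a nondecreasing process supported on visits to the cut locus.

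Second, I would pick $\phi$ so that, up to exponential renormalization in $t$, $\phi(r_t)$ becomes a supermartingale on $[0, \tau]$ with $\tau := \tau_{B(x,1)}$. The natural candidate is $\phi(r) = \cosh(\alpha L r)$ for a dimensional constant $\alpha > \tfrac12 + \log 2$; the critical feature is $\phi'(0) = 0$, which absorbs the $1/r$ singularity in the comparison bound. A short calculation using $\sinh(u) \leq u\cosh(u)$ for $u \geq 0$ yields
\begin{equation}
\phi''(r) + \phi'(r)\,c_n\bigl(L + 1/r\bigr) \leq \kappa(n,\alpha)\,L^2\,\phi(r)
\end{equation}
uniformly in $r \in (0,1]$ and $L \geq 1$. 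Therefore $Z_t := e^{-\kappa L^2 t}\phi(r_t)$ is a supermartingale on $[0,\tau]$, noting that $-\phi'(r_t)\,dA_t$ only helps since $\phi' \geq 0$.

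Third, optional stopping at $t \wedge \tau$ combined with Markov's inequality, using $\phi(0) = 1$ and $\phi(1) \geq \tfrac12 e^{\alpha L}$, gives
\begin{equation}
P_x\bigl(\tau \leq t\bigr) \leq \frac{\mathbb{E}_x\bigl[e^{-\kappa L^2(t \wedge \tau)}\phi(r_{t\wedge\tau})\bigr]}{e^{-\kappa L^2 t}\phi(1)} \leq 2\,e^{\kappa L^2 t - \alpha L}.
\end{equation}
Setting $t = C/L$ with $C = C(n,\alpha)$ small enough that $\kappa C \leq \alpha - \tfrac12 - \log 2$ delivers $P_x(\tau_{B(x,1)} \leq C/L) \leq e^{-L/2}$ for every $L \geq 1$.

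The principal obstacle is the rigorous handling of the cut locus, where Itô's formula for $r_t$ is not literally valid. One either invokes Kendall's extension---which shows the exceptional contribution is a nonincreasing process, preserving the supermartingale inequality because $\phi' \geq 0$---or approximates $r_t$ by smoothed distance functions and passes to the limit. Once that is secured, the algebraic estimate on $\phi'' + \phi'\Delta r$ and the concluding exponential Markov argument are routine.
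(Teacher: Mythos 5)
This statement is not proved in the paper at all: it is quoted verbatim as Theorem 3.6.1 of Hsu's book \cite{hsu_book} and used as a black box in Example \ref{e.4.34}, so there is no in-paper argument to compare against. Your proof is correct and is the standard argument in this circle of ideas (Kendall's semimartingale decomposition of the radial process, the Laplacian comparison theorem, and an exponential supermartingale). The key computations check out: with $\phi(r)=\cosh(\alpha Lr)$ one has $\phi''=\alpha^2L^2\phi$, $\phi'c_nL\le c_n\alpha L^2\phi$ (since $\sinh\le\cosh$), and $\phi'c_n/r\le c_n\alpha^2L^2\phi$ (since $\sinh u\le u\cosh u$), giving $\kappa(n,\alpha)=\alpha^2+c_n\alpha+c_n\alpha^2$; the generator $\tfrac12\Delta$ only improves the drift bound by a factor $\tfrac12$. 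Optional stopping of the bounded supermartingale $Z_{t\wedge\tau}$ together with $\cosh(\alpha L)\ge\tfrac12 e^{\alpha L}$ then yields $P_x(\tau\le C/L)\le e^{\log 2+(\kappa C-\alpha)L}\le e^{-L/2}$ once $\alpha>\tfrac12+\log 2$ and $\kappa C\le\alpha-\tfrac12-\log 2$, and $C$ depends only on $n$ as required. Two points deserve an explicit word in a polished write-up: first, the Laplacian comparison with constant $c_n=n-1$ (via $K\coth(Kr)\le K+1/r$ with $K=L/\sqrt{n-1}\le L$) only needs the Ricci bound on $B(x,1)$ because minimal geodesics from $x$ to points of $B(x,1)$ stay inside $B(x,1)$; second, Kendall's decomposition is being applied with the process started at the center $r_0=0$, where $\Delta r\sim(n-1)/r$ blows up --- your choice $\phi'(0)=0$ makes the drift $\phi'(r)\Delta r$ bounded near the origin and so disposes of this issue, but it is worth saying so. With those remarks added, the argument is complete and self-contained, which is arguably an improvement over the paper's bare citation.
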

We can assume, without loss of generality, that $K\geq 1$.  In which case, if we set $t=\frac{C}{\sqrt{K}}$, then for $s<t$ we have
\[P_x(d(X_s,x)\leq 1)\geq P_x(\tau_{B(x,1)}\geq s)\geq P_x(\tau_{B(x,1)}>t)>1-e^{-\sqrt{K}/2}.\]
It follows that $\{X_t\}_{t\geq 0}$ is uniformly local with $\delta=1$.
\end{example}
Clearly, if $\delta_1<\delta_2$, then a uniformly local processes with parameter $\delta_1$ is also a uniformly local process with parameter $\delta_2$.  Furthermore, if the process possesses a scaling property then the choice of $\delta$ is particularly arbitrary.  For example, if $\{B_t\}_{t\geq 0}$ is standard Brownian motion on $\mathbb{R}$ and $P_x(|B_s|<\delta)\geq \epsilon$ when $s<t$, then Brownian scaling indicates that whenever $s<\alpha^2t$ for some $0<\alpha<1$,
\[P_x(|B_s|\leq \alpha\delta)=P_x(|\frac{1}{\alpha}B_s|\leq \delta)=P_x(|B_{s/\alpha^2}|\leq \delta)\geq \epsilon.\]
 This implies that $\{B_t\}_{t\geq 0}$ is uniformly local with \textit{any} parameter $\delta>0$.  As this discussion indicates, one often has a choice as to what parameter $\delta$ to assign to a uniformly local process, however, the value of $\delta$ matters in the application of the following theorem, which is the primary result of this section.
\begin{theorem}\label{t.337}
Suppose $\{X_t\}_{t\geq 0}$ is uniformly local with parameter $\delta>0$ and $f\in C^2(M)$.  Let $t_0, T>0$.  Suppose that for every $x\in M$, there is a $\eta(x)>0$ such that the following conditions are satisfied:
\begin{enumerate}[(A)]
\item $\displaystyle \int_M|f(y)|^{1+\eta(x)}\mu_t^x(dy)<\infty$ for all $0\leq t<T$.
\item For any $t<t_0$,
\[\lim_{n\rightarrow\infty}\left(\sup_{y\in B(x,n)}|f(y)|\right)P_x\left(n-\delta\leq d(X_t,x)\leq n+\delta\right)^{\frac{1}{1+\eta(x)}}=0.\]
\end{enumerate}
Then the following statements hold:
\begin{enumerate}[(1)]
\item If $Lf=0$, then for all $0\leq t<T$
\[e^{tL}f=f.\]
\item If $Lf\geq 0$, then for all $0\leq t<T$
\[e^{tL}f\geq f.\]
\item If $Lf=0$, or if $Lf\geq 0$ and $f\geq 0$, and for some $1\leq p<\infty$ and $o\in M$
\[\int_M |f(y)|^p\mu_T^o(dy)<\infty,\]
then for any $0\leq s\leq  t<T$
\begin{equation*}
\int_M|f(y)|^p\mu_s^o(dy)\leq\int_M|f(y)|^p\mu_t^o(dy)\leq \int_M |f(y)|^p\mu_T^o(dy).
\end{equation*}
\end{enumerate}
\end{theorem}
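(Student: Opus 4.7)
The plan is to establish parts (1) and (2) by a probabilistic optional-stopping argument, and then deduce part (3) from them together with the contractivity of Theorem \ref{t.1.1}. For parts (1) and (2), the starting point is the local martingale identity in Eq.\ (\ref{e.143}): for any $x \in M$, the process $M_t^f = f(X_t) - \int_0^t (Lf)(X_s)\,ds$ is a local martingale under $P_x$. Taking $\tau_n = \tau_{B(x,n)}$, both $f(X_{t \wedge \tau_n})$ and the stopped drift integral are bounded, since $f$ and $Lf$ are continuous on the compact ball $B(x,n)$; so the stopped process is a genuine martingale and optional stopping gives
\[
\mathbb{E}_x[f(X_{t \wedge \tau_n})] = f(x) + \mathbb{E}_x\!\left[\int_0^{t \wedge \tau_n} (Lf)(X_s)\,ds\right].
\]
This yields $\mathbb{E}_x[f(X_{t \wedge \tau_n})] = f(x)$ when $Lf = 0$, and $\ge f(x)$ when $Lf \ge 0$. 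The remaining task is to show that $\mathbb{E}_x[f(X_{t \wedge \tau_n})] \to (e^{tL}f)(x)$ as $n \to \infty$.

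To do that, I would write
\[
\mathbb{E}_x[f(X_t)] - \mathbb{E}_x[f(X_{t \wedge \tau_n})] = \mathbb{E}_x\!\left[(f(X_t) - f(X_{\tau_n}))\mathbf{1}_{\tau_n \le t}\right].
\]
The piece $\mathbb{E}_x[|f(X_t)|\mathbf{1}_{\tau_n \le t}]$ goes to zero by dominated convergence: condition (A) gives $|f(X_t)| \in L^1(P_x)$ (since $\mu_t^x$ is a probability measure, so $L^{1+\eta(x)} \subset L^1$), while stochastic completeness forces $\mathbf{1}_{\tau_n \le t} \to 0$ almost surely. The harder piece $\mathbb{E}_x[|f(X_{\tau_n})|\mathbf{1}_{\tau_n \le t}]$ is bounded by $S_n \cdot P_x(\tau_n \le t)$, where $S_n := \sup_{y \in B(x,n)} |f(y)|$. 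To control $P_x(\tau_n \le t)$, I would combine uniform locality with the strong Markov property at $\tau_n$: writing $t^\ast$ and $\epsilon$ for the constants in the definition of uniform locality, for any $t < t^\ast$ and on $\{\tau_n \le t\}$ we have
\[
P_x\!\left(d(X_t, X_{\tau_n}) \le \delta \,\big|\, \mathcal{F}_{\tau_n}\right) \ge \epsilon,
\]
and path continuity forces $d(X_{\tau_n},x) = n$, so $d(X_t,x) \in [n-\delta, n+\delta]$ on the intersected event. Taking expectations gives
\[
P_x(\tau_n \le t) \le \epsilon^{-1} P_x\!\left(n-\delta \le d(X_t, x) \le n+\delta\right).
\]
Combined with the trivial bound $P \le P^{1/(1+\eta(x))}$ for $P \in [0,1]$, condition (B) then forces $S_n \cdot P_x(\tau_n \le t) \to 0$. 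Identifying this boundary estimate is the main technical obstacle of the proof.

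The preceding argument yields the pointwise conclusion of (1) or (2) for every $t < \min(t_0, t^\ast)$. To extend to all $0 \le t < T$, I would use the semigroup identity $e^{tL} = (e^{sL})^k$, a consequence of the Markov property already exploited in Eq.\ (\ref{e.markov}): given any $t < T$, choose $k$ large enough that $s := t/k < \min(t_0, t^\ast)$. In the harmonic case, iterating $e^{sL}f = f$ then gives $e^{tL}f = f$. In the subharmonic case, iterate $e^{sL}f \ge f$ using the monotonicity of the positive operator $e^{sL}$: $e^{2sL}f = e^{sL}(e^{sL}f) \ge e^{sL}f \ge f$, and so on, yielding $e^{tL}f \ge f$.

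Finally, part (3) follows from (1), (2), and Theorem \ref{t.1.1}. In both the harmonic case and the nonnegative subharmonic case, parts (1)--(2) give the pointwise bound $|f|^p \le |e^{(t-s)L}f|^p$ on $M$. Integrating against $\mu_s^o$ and applying Theorem \ref{t.1.1} with total time $t$ and interval length $t-s$ produces
\[
\|f\|_{L^p(M,\mu_s^o)} \le \|e^{(t-s)L}f\|_{L^p(M,\mu_s^o)} \le \|f\|_{L^p(M,\mu_t^o)}.
\]
Applying the same chain with $(s,t)$ replaced by $(t,T)$ gives the remaining inequality $\|f\|_{L^p(M,\mu_t^o)} \le \|f\|_{L^p(M,\mu_T^o)}$.
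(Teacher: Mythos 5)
Your proposal is correct and follows essentially the same route as the paper: a stopped (sub)martingale argument at $\tau_{B(x,n)}$, the key estimate $P_x(\tau_{B(x,n)}\le t)\le\epsilon^{-1}P_x\left(n-\delta\le d(X_t,x)\le n+\delta\right)$ obtained from uniform locality via the strong Markov property, removal of the boundary term by condition $(B)$, extension from small times to all $t<T$ by the semigroup property, and part $(3)$ from Theorem \ref{t.1.1}. The one step you assert informally---the conditional bound $\mathbb{E}_x[1_E(X_t)\mid\mathcal{F}_{\tau}]\ge\epsilon\, 1_{\{\tau\le t\}}$ at the random time $\tau$ with random remaining time $t-\tau$---is exactly what the paper's Lemma \ref{l.30} establishes carefully via dyadic approximation of the stopping time and continuous approximation of $1_E$, and you correctly identify it as the main technical obstacle.
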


In the applications of Theorem \ref{t.337} that follow, condition $(A)$ will follow from a suitable Harnack inequality, while condition $(B)$ will follow from estimates of the supremum of $f$ over balls in terms of the $L^p(M,\mu_t^x)$ norm of $f$ and suitable upper and lower bounds on the heat kernel in terms of the distance function. The proof of the above theorem appears at the end of this section and uses the following results.
\begin{proposition}\label{p.298}
Suppose $f\in C^2(M)$ such that
\[\int_M |f(y)|^{p}\mu_t^x(dy)<\infty\]
for some fixed $x\in M$, $1<p<\infty$ and $t\geq 0$, and suppose there exists a $C<\infty$ such that for all positive integers $n$,
\[\left(\sup_{y\in B(x,n)}|f(y)|\right)P_x(\tau_{B(x,n)}\leq t)^{\frac{1}{p}}<C.\]
Then $\mathbb{E}_x[f(X_t)]=(e^{tL}f)(x)=f(x)$ if $f$ if $Lf=0$ while $\mathbb{E}_x[f(X_t)]=(e^{tL}f)(x)\geq f(x)$ if $Lf\geq 0$.
\end{proposition}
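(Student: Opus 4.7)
The plan is to apply the optional stopping theorem to the local (sub)martingale $f(X^x_t)$ stopped at the first exit times $\tau_n := \tau_{B(x,n)}$ from the closed balls $B(x,n)$, and then to send $n \to \infty$, using the integrability hypothesis on $f$ together with the stochastic completeness of $M$.

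First, since $B(x,n)$ is compact and $f, Lf \in C(M)$, both $f$ and $Lf$ are bounded on $B(x,n)$. Thus $t \mapsto M^f_{t \wedge \tau_n} = f(X_{t \wedge \tau_n}) - \int_0^{t \wedge \tau_n} (Lf)(X_s)\, ds$, which is a local martingale by Eq.~(\ref{e.143}), is in fact bounded and hence a genuine martingale. Taking expectations gives
\begin{equation*}
\mathbb{E}_x\bigl[f(X_{t \wedge \tau_n})\bigr] = f(x) + \mathbb{E}_x\!\left[\int_0^{t \wedge \tau_n} (Lf)(X_s)\, ds\right],
\end{equation*}
which equals $f(x)$ when $Lf = 0$ and is at least $f(x)$ when $Lf \geq 0$.

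The main step is then to pass to the limit $n \to \infty$. I would decompose
\begin{equation*}
\mathbb{E}_x\bigl[f(X_{t \wedge \tau_n})\bigr] = \mathbb{E}_x\bigl[f(X_t)\,1_{\{\tau_n > t\}}\bigr] + \mathbb{E}_x\bigl[f(X_{\tau_n})\,1_{\{\tau_n \leq t\}}\bigr].
\end{equation*}
For the first term, $p > 1$ together with the fact that $\mu^x_t$ is a probability measure gives $f(X_t) \in L^1(P_x)$; stochastic completeness then forces $1_{\{\tau_n > t\}} \to 1$ almost surely, so dominated convergence yields $\mathbb{E}_x[f(X_t)\,1_{\{\tau_n > t\}}] \to \mathbb{E}_x[f(X_t)]$. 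For the boundary term, continuity of sample paths places $X_{\tau_n}$ on $\partial B(x,n)$ whenever $\tau_n \leq t$, so
\begin{equation*}
\Bigl|\mathbb{E}_x\bigl[f(X_{\tau_n})\,1_{\{\tau_n \leq t\}}\bigr]\Bigr| \leq \Bigl(\sup_{y \in B(x,n)} |f(y)|\Bigr) P_x(\tau_n \leq t).
\end{equation*}
Writing $P_x(\tau_n \leq t) = P_x(\tau_n \leq t)^{1/p} \cdot P_x(\tau_n \leq t)^{1-1/p}$ and invoking the standing hypothesis, this quantity is bounded by $C\,P_x(\tau_n \leq t)^{1-1/p}$, which tends to $0$ as $n \to \infty$.

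The essential obstacle is that $f$ may grow rapidly at infinity, so $\sup_{y \in B(x,n)} |f(y)|$ can blow up; the hypothesis of the proposition is calibrated precisely so that, after the H\"older-style splitting of $P_x(\tau_n \leq t)$, an extra factor $P_x(\tau_n \leq t)^{1/q}$ with $1/p + 1/q = 1$ remains to absorb that growth. Combining the two limits gives $\mathbb{E}_x[f(X_t)] = f(x)$ when $Lf = 0$ and $\mathbb{E}_x[f(X_t)] \geq f(x)$ when $Lf \geq 0$, as claimed.
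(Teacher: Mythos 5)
Your proof is correct, and its skeleton is the same as the paper's: stop the local (sub)martingale $f(X^x_\cdot)$ at the exit times $\tau_n=\tau_{B(x,n)}$ of compact balls, observe that the stopped process is bounded and hence a genuine (sub)martingale so that $\mathbb{E}_x[f(X_{t\wedge\tau_n})]=f(x)$ (resp.\ $\geq f(x)$), and then let $n\to\infty$. The two arguments diverge only in how the limit is taken. The paper shows $f(X_{t\wedge\tau_n})\to f(X_t)$ in probability (since $P_x(\tau_n\leq t)\to 0$ by stochastic completeness) and uses the triangle inequality
\[
\mathbb{E}_x[|f(X_{t\wedge\tau_n})|^{p}]^{1/p}\leq \mathbb{E}_x[|f(X_{t})|^{p}]^{1/p}+\Bigl(\sup_{y\in B(x,n)}|f(y)|\Bigr)P_x(\tau_n\leq t)^{1/p}
\]
to conclude that the family is bounded in $L^p(P)$ with $p>1$, hence uniformly integrable, hence convergent in $L^1$. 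You instead split $\mathbb{E}_x[f(X_{t\wedge\tau_n})]$ over the events $\{\tau_n>t\}$ and $\{\tau_n\leq t\}$, handle the first piece by dominated convergence (using only $f(X_t)\in L^1(P_x)$) and kill the boundary piece by writing $P_x(\tau_n\leq t)=P_x(\tau_n\leq t)^{1/p}\,P_x(\tau_n\leq t)^{1-1/p}$ and letting the leftover factor $P_x(\tau_n\leq t)^{1-1/p}\to 0$. Your route is slightly more elementary, as it avoids invoking the fact that $L^p$-boundedness plus convergence in probability implies $L^1$-convergence, while the paper's uniform-integrability argument is marginally more robust (it would survive with the weaker conclusion that the product in the hypothesis is merely bounded, which is in fact all the proposition assumes --- and is also all your splitting needs, since the extra factor $P_x(\tau_n\leq t)^{1-1/p}$ still tends to zero). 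Both uses of the hypothesis are calibrated identically, and your handling of the drift term $\int_0^{t\wedge\tau_n}(Lf)(X_s)\,ds$ is a clean way to get the martingale/submartingale dichotomy in one stroke.
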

\begin{proof}
Since $f\in C^2(M)$, it is continuous and hence bounded on $B(x,n)$.  Eq. (\ref{e.143}) implies that $\{f(X^x_t)\}_{t\geq 0}$ is a local (sub)martingale when $Lf=(\geq)0$, i.e. there exists an increasing sequence of stopping times $\{\sigma_m\}_{m\geq 1}$ such that $\{f(X^x_{t\wedge \sigma_m})\}_{t\geq 0}$ is a (sub)martingale for all $m$.  In addition, $\{f(X^x_{t\wedge \sigma_m\wedge\tau_{B(x,n)}})\}_{t\geq 0}$ is a (sub)martingale for all $n$ and $f(X^x_{t\wedge \sigma_m\wedge\tau_{B(x,n)}})$ converges a.s. and boundedly to $f(X^x_{t\wedge\tau_{B(x,n)}})$ for any fixed $t$ as $m\rightarrow \infty$.  It follows that
\begin{align*}
\mathbb{E}_x[f(X_{t\wedge \tau_{B(x,n)}})|\mathcal{F}_s]&=\lim_{m\rightarrow\infty}\mathbb{E}_x[f(X_{t\wedge \sigma_m\wedge\tau_{B(x,n)}})|\mathcal{F}_s]\\&=(\geq)\lim_{m\rightarrow\infty}f(X_{s\wedge \sigma_m\wedge\tau_{B(x,n)}})\\&=f(X_{s\wedge\tau_{B(x,n)}})
\end{align*}
where the first equality follows by the Dominated Convergence Theorem for conditional expectation and the second (in)equality is the (sub)martingale property of $\{f(X_{t\wedge \sigma_m\wedge\tau_{B(x,n)}})\}_{t\geq 0}$.  Therefore, the process $\{f(X^x_{t\wedge\tau_{B(x,n)}})\}_{t\geq 0}$ is a bounded (sub)martingale when $f$ is $L$-(sub)harmonic.  In particular,
\[\mathbb{E}_x[f(X_{t\wedge\tau_{B(x,n)}})]=(\geq)\mathbb{E}_x[f(X_0)]=f(x)\]
for all $t\geq 0$ and all $n$.  The remainder of the argument amounts to showing that $\lim_{n\rightarrow\infty}\mathbb{E}_x[f(X_{t\wedge\tau_{B(x,n)}})]=\mathbb{E}_x[f(X_t)]$ and is independent of whether the function $f$ is $L$-harmonic or $L$-subharmonic.

Note that if $t=0$, then the conclusion of the theorem follows immediately, and so we will assume $t$ is a fixed positive number.  Consider the sequence of random variables, now indexed by the positive integers, $\{f(X^x_{t\wedge\tau_{B(x,n)}})\}_{n\geq 0}$.  Clearly, $\mathbb{E}_x[|f(X_{t\wedge\tau_{B(x,n)}})|]<\infty$ for all $n$ since $f$ is bounded on $B(x,n)$.  Also, since $P_x(\tau_{B(x,n)}\leq t)\rightarrow 0$ as $n\rightarrow\infty$, we have $f(X^x_{t\wedge\tau_{B(x,n)}})\rightarrow f(X^x_t)$ in probability.  Note that
\begin{align*}
\mathbb{E}_x[|f(X_{t\wedge\tau_{B(x,n)}})|^{p}]^{\frac{1}{p}}&\leq \mathbb{E}_x[|f(X_{t})|^{p}1_{\{t< \tau_{B(x,n)}\}}]^{\frac{1}{p}}+\mathbb{E}_x[|f(X_{\tau_{B(x,n)}})|^{p}1_{\{\tau_{B(x,n)}\leq t\}}]^{\frac{1}{p}}\\ &\leq \mathbb{E}_x[|f(X_{t})|^{p}]^{\frac{1}{p}}+\left(\sup_{y\in B(x,n)}|f(y)|\right)P_x(\tau_{B(x,n)}\leq t)^{\frac{1}{p}}.
\end{align*}
Our assumption therefore guarantees that $\{f(X_{t\wedge\tau_{B(x,n)}}^x)\}_{n\geq 0}$ is bounded in $L^{p}(P)$.  This, along with the fact that $f(X_{t\wedge\tau_{B(x,n)}}^x)\rightarrow f(X^x_t)$ in probability, implies that $f(X_{t\wedge\tau_{B(x,n)}}^x)\rightarrow f(X^x_t)$ in $L^1(P)$ (see, for example, pg. 353 of \cite{grimmett}).  Hence,
\[\mathbb{E}_x[f(X_t)]=\lim_{n\rightarrow\infty}\mathbb{E}_x[f(X_{t\wedge\tau_{B(x,n)}})]=f(x).\]
\end{proof}

\begin{proposition}\label{p.299}
Suppose $f$ is a measurable function on $M$ and suppose that there exists $0<t< T$ such that for all $x\in M$
\begin{enumerate}
\item $\int_M |f(y)|\mu_r^x(dy)<\infty$ for all $0\leq r<T$
\item $(e^{sL}f)(x)=f(x)$ for all $0\leq s<t$.
\end{enumerate}
Then $(e^{sL}f)(x)=f(x)$ for all $x\in M$ and $0\leq s< T$. Similarly, if $(2)$ is replaced by $(e^{sL}f)(x)\geq f(x)$, then $(e^{sL}f)(x)\geq f(x)$ for all $x\in M$ and $0\leq s< T$.
\end{proposition}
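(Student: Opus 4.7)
The plan is to bootstrap from the subinterval $[0,t)$ up to the full interval $[0,T)$ by iterating the Chapman--Kolmogorov identity one short time-step at a time. Fix $s\in[0,T)$ (the case $s=0$ is trivial) and choose an integer $N$ large enough that $\delta:=s/N<t$. I plan to peel off successive increments of size $\delta$ until the exponent drops below $t$, at which point hypothesis~(2) applies directly.

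The single-step identity is the following. By the semigroup property of the transition kernels (cf.\ Eq.~(\ref{e.markov})) and Fubini's theorem,
\begin{equation*}
\int_M f(y)\,\mu_s^x(dy)\;=\;\int_M\!\left(\int_M f(y)\,\mu_\delta^z(dy)\right)\mu_{s-\delta}^x(dz).
\end{equation*}
The use of Fubini is legitimate because hypothesis~(1) ensures that $\int_M|f(y)|\,\mu_s^x(dy)<\infty$, so the iterated integral of $|f|$ is finite. Since $\delta<t$, hypothesis~(2) tells us that the inner integral equals $f(z)$ (respectively, is $\geq f(z)$ in the subharmonic case) for \emph{every} $z\in M$. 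Integrating against the positive measure $\mu_{s-\delta}^x(dz)$ preserves this equality (respectively, inequality in the stated direction), yielding
\begin{equation*}
(e^{sL}f)(x)\;=\;(\geq)\;\int_M f(z)\,\mu_{s-\delta}^x(dz)\;=\;(e^{(s-\delta)L}f)(x).
\end{equation*}

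Next, I would iterate this identity. At stage $k=1,2,\dots,N-1$, I apply the same decomposition with $s$ replaced by $s-(k-1)\delta$ and $\delta$ as above. At each such stage $s-k\delta$ remains in $[0,T)$, so hypothesis~(1) supplies the integrability needed to invoke Fubini, and $\delta<t$, so hypothesis~(2) supplies the inner replacement. This produces a chain
\begin{equation*}
(e^{sL}f)(x)\;=\;(\geq)\;(e^{(s-\delta)L}f)(x)\;=\;(\geq)\;\cdots\;=\;(\geq)\;(e^{\delta L}f)(x),
\end{equation*}
and a last direct application of hypothesis~(2) gives $(e^{\delta L}f)(x)=(\geq)f(x)$, completing the proof.

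The only real obstacle is bookkeeping: one must check that Fubini is justified at every intermediate stage and that all the exponents $s-k\delta$ remain in $[0,T)$. Both are immediate: the exponents are a decreasing sequence starting below $T$, and hypothesis~(1) is assumed uniformly in the base point, which is what allows the inner kernel integration to be replaced by $f$ (or bounded by $f$) and then integrated against any $\mu_{s-k\delta}^x$. In the subharmonic case, the positivity of the transition measures is what guarantees that the inequality propagates in the correct direction through each iteration.
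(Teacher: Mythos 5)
Your proof is correct and follows essentially the same route as the paper: both use the Chapman--Kolmogorov decomposition together with Fubini (justified by hypothesis (1) via Tonelli applied to $|f|$) to replace the inner short-time integral by $f$ using hypothesis (2), and then iterate. The only cosmetic difference is the iteration scheme — you peel off a fixed increment $\delta=s/N<t$ at each step, whereas the paper splits $s=s/2+s/2$ and doubles the valid time interval each round.
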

\begin{proof}
If $t\leq s<2t\wedge T$, then $s/2<t$ and $(1)$ implies the use of Fubini's Theorem is justified in second line of the computation
\begin{align*}
(e^{sL}f)(x) &=\int_M f(y)\left(\int_M\mu_{s/2}^z(dy)\mu_{s/2}^x(dz)\right)\\ &=\int_M \mu_{s/2}^x(dz)\left(\int_M f(y)\mu_{s/2}^z(dy)\right)\\ &=\int_M f(z)\mu_{s/2}^x(dz)\\ &=f(x).
\end{align*}
The first equality follows from the Markov property, while assumption $(2)$ implies the last two equalities.  This establishes that $(e^{sL}f)(x)=f(x)$ for all $0\leq s<2t\wedge T$, and repeating this process as needed yields the first result for $0\leq s<T$.  The second result is obtained by changing the last two equalities into inequalities.
\end{proof}

\begin{lemma}\label{l.30}
Let $D,E\subset M$ denote a closed sets such that
\[P_y(X_t\in E)=\mathbb{E}_y[1_E(X_t)]\geq\epsilon\]
for all $y\in D$ and $0\leq t<T$.  If $\tau$ denotes the first hitting time of $D$ and $\tau<\infty$ a.s., then for all $x\in M$ and all $0\leq t<T$,
\[\mathbb{E}_x[1_E(X_t)|\mathcal{F}_\tau]\geq\epsilon 1_{\{\tau\leq t\}}\quad\text{P-a.s.}\]
\end{lemma}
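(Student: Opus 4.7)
The plan is to split the desired inequality into the two events $\{\tau \leq t\}$ and $\{\tau > t\}$ and handle them separately. On $\{\tau > t\}$ the right-hand side $\epsilon 1_{\{\tau \leq t\}}$ is identically zero, while the conditional expectation of the indicator $1_E(X_t)$ is trivially non-negative, so the inequality holds immediately on this event. Therefore the entire content of the lemma lies on the event $\{\tau \leq t\}$.

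On $\{\tau \leq t\}$, the idea is to apply the strong Markov property at the stopping time $\tau$. Writing $t = \tau + (t - \tau)$ with the non-negative, $\mathcal{F}_\tau$-measurable shift $t - \tau$, the strong Markov property yields
\[
\mathbb{E}_x[1_E(X_t) \mid \mathcal{F}_\tau]\,1_{\{\tau \leq t\}} = \bigl(\mathbb{E}_{y}[1_E(X_s)]\bigr)\Big|_{y = X_\tau,\ s = t-\tau}\,1_{\{\tau \leq t\}}.
\]
Here I will need to formalize the evaluation of a conditional expectation with a random (but $\mathcal{F}_\tau$-measurable) time argument, which is a standard consequence of the Markov property applied to the regular conditional law of $\{X_{\tau+s}\}_{s\geq 0}$ given $\mathcal{F}_\tau$.

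The final step is to invoke the hypothesis $P_y(X_s \in E) \geq \epsilon$ for $y \in D$ and $s \in [0,T)$. Two small verifications are needed: first, $X_\tau \in D$ on $\{\tau < \infty\}$, which follows from continuity of sample paths together with the closedness of $D$ and the definition of $\tau$ as the first hitting time; second, the random time $s = t - \tau$ lies in $[0,T)$ on $\{\tau \leq t\}$, which is automatic since $0 \leq \tau \leq t < T$. With these, the hypothesis gives $\mathbb{E}_{X_\tau}[1_E(X_{t-\tau})] \geq \epsilon$ almost surely on $\{\tau \leq t\}$, and combining with the trivial case on $\{\tau > t\}$ completes the proof.

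I expect the main obstacle to be strictly notational: carefully justifying the substitution of the random time $t - \tau$ and random starting point $X_\tau$ into the expression $\mathbb{E}_y[1_E(X_s)]$ inside a conditional expectation. Conceptually this is just the strong Markov property, but it must be phrased with enough care (e.g.\ via the shifted process and a Borel-measurable kernel $(y,s) \mapsto P_y(X_s \in E)$) that the measurability considerations are transparent.
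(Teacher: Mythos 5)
Your proposal is correct and follows the same route as the paper: condition on $\mathcal{F}_\tau$, apply the strong Markov property at $\tau$ with the $\mathcal{F}_\tau$-measurable shift $\sigma=t-\tau$, and use that $X_\tau\in D$ (closedness of $D$ plus path continuity) and $0\leq t-\tau<T$ on $\{\tau\leq t\}$ so the hypothesis $\mathbb{E}_y[1_E(X_s)]\geq\epsilon$ applies. Be aware, though, that the step you defer as ``standard''---evaluating the conditional expectation at the random starting point $X_\tau$ and random time $t-\tau$---is where essentially all of the paper's proof lives: it is carried out by hand via the dyadic discretization $\sigma_n=D_n\circ\sigma$ (so that the ordinary strong Markov property can be applied at each fixed time $k/2^n$ on the $\mathcal{F}_\tau$-measurable event $\{\sigma_n=k/2^n\}$), a passage to the limit $\sigma_n\to\sigma$ that requires replacing $1_E$ by bounded continuous functions $g_n\downarrow 1_E$, and two applications of dominated convergence for conditional expectations.
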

\begin{proof}
We first prove the above result with $1_E$ replaced by a non-negative bounded continuous function.  To that end, suppose $g:M\rightarrow \mathbb{R}$ is a non-negative bounded continuous function such that $\mathbb{E}_y[g(X_t)]\geq\epsilon$
for all $y\in D$ and $0\leq t<T$.  It follows that $\mathbb{E}_{\nu}[g(X_t)]\geq\epsilon$ for any initial distribution $\nu$ supported in $D$; in particular $\mathbb{E}_{X_\tau^x}[g(X_t)]\geq\epsilon$ for any $x\in M$.

Set $\sigma=t-\tau$.  Observe that $0\leq \sigma< t$ on $\{\tau\leq t\}$ and
\[\mathbb{E}_x[g(X_t)|\mathcal{F}_\tau]\geq \mathbb{E}_x[1_{\{\tau\leq t\}}g(X_t)|\mathcal{F}_\tau]=\mathbb{E}_x[1_{\{\tau\leq t\}}g(X_{\tau+\sigma})|\mathcal{F}_\tau].\]
For a positive integer $n$, let $D_n$ denote the dyadic approximation on $\mathbb{R}$ given by $D_n(t)=\frac{k}{2^n}$ when $\frac{k}{2^n}\leq t<\frac{k+1}{2^n}$.  $D_n$ is right-continuous and measurable, and $\lim_{n\rightarrow\infty}D_n(t)=t$ for all $t$.  Consider the random variables $\sigma_n:=D_n\circ\sigma$.  Note that $\sigma_n\rightarrow \sigma$ almost surely and for any $k$,
\[\{\sigma_n=\frac{k}{2^n}\}=\{\frac{k}{2^n}\leq \sigma< \frac{k+1}{2^n}\}=\{t-\frac{k+1}{2^n}<\tau\leq t-\frac{k}{2^n}\}\in\mathcal{F}_\tau.\]
On $\{\tau\leq t\}$, $\sigma_n$ takes on only the finitely many positive values $\{0,\frac{1}{2^n},\frac{2}{2^n},...,\frac{N}{2^n}\}$.  It follows that
\[1_{\{\tau\leq t\}}g(X_{\tau+\sigma_n})=1_{\{\tau\leq t\}}\left(\sum_{k=0}^N 1_{\{\sigma_n=\frac{k}{2^n}\}}g(X_{\tau+\frac{k}{2^n}})\right)\]
for all $n$.  Now using the $\mathcal{F}_\tau$-measurability of $\{\tau\leq t\}$ and $\{\sigma_n=\frac{k}{2^n}\}$ and the strong Markov property, we see
\begin{align*}
\mathbb{E}_x[1_{\{\tau\leq t\}}g(X_{\tau+\sigma_n})|\mathcal{F}_\tau] &=1_{\{\tau\leq t\}}\left(\sum_{k=0}^N 1_{\{\sigma_n=\frac{k}{2^n}\}}\mathbb{E}_x[g(X_{\tau+\frac{k}{2^n}})|\mathcal{F}_\tau]\right)\\ &=1_{\{\tau\leq t\}}\left(\sum_{k=0}^N 1_{\{\sigma_n=\frac{k}{2^n}\}}\mathbb{E}_{X_\tau^x}[g(X_{\frac{k}{2^n}})|\mathcal{F}_\tau]\right)\\&\geq \epsilon 1_{\{\tau\leq t\}}.
\end{align*}
Since $g$ is continuous, $g(X_{\tau+\sigma_n}^x)\rightarrow g(X_t)$ a.s.  Since $g$ is bounded, the Dominated Convergence Theorem for conditional expectation gives
\[\mathbb{E}_x[g(X_t)|\mathcal{F}_\tau]\geq \mathbb{E}_x[1_{\{\tau\leq t\}}g(X_t)|\mathcal{F}_\tau]=\lim_{n\rightarrow\infty}\mathbb{E}_x[1_{\{\tau\leq t\}}g(X_{\tau+\sigma_n})|\mathcal{F}_\tau]\geq \epsilon 1_{\{\tau\leq t\}}.\]
We now construct a sequence of bounded continuous functions $\{g_n\}$ which decrease everywhere to $1_E$.  Since for each $n$, $g_n\geq 1_E$, it follows that $\mathbb{E}_x[g_n(X_s)]\geq\mathbb{E}_x[1_E(X_s)]\geq\epsilon$.  The above argument implies that $\mathbb{E}_x[g_n(X_t)|\mathcal{F}_\tau]\geq\epsilon 1_{\{\tau\leq t\}}$ $P$-a.s. for each $n$.  The desired result now follows by again applying the Dominated Convergence Theorem for conditional expectation.
\end{proof}

\begin{proof}[Proof of Theorem \ref{t.337}]
Fix $x\in M$ and $n\geq 0$.  For notational simplicity, we will set $\tau=\tau_{B(x,n)}$.  Set $E=\{y\in M|n-\delta\leq d(x,y)\leq n+\delta\}$ so that
\[P_x\left(n-\delta\leq d(x,X_t)\leq n+\delta\right)= \mathbb{E}_x[1_E(X_t)].\]
Since $\{X_t\}_{t\geq 0}$ is uniformly $\delta$-local, there exists a $t_1$ and an $0<\epsilon\leq 1$ such that $P_{y}[X_t\in E]=\mathbb{E}_y[1_E(X_t)]\geq \epsilon$ for all $y\in \partial B(x,n)$ whenever $t<t_1$.  Lemma \ref{l.30} then implies that $\mathbb{E}_x[1_E(X_t)|\mathcal{F}_\tau]\geq \epsilon 1_{\{\tau\leq t\}}$ a.s. whenever $t<t_1$.  It follows that
\[\mathbb{E}_x[1_E(X_t)]=\mathbb{E}_x[\mathbb{E}[1_E(X_t)|\mathcal{F}_\tau]]\geq \epsilon P_x(\tau\leq t),\]
giving
\[P_x(\tau\leq t)\leq \epsilon^{-1}P_x\left(n-\delta\leq d(x,X_t)\leq n+\delta\right),\]
and in particular
\begin{align*}
P_x(\tau\leq t)^{\frac{1}{1+\eta(x)}}&\leq \epsilon^{-\frac{1}{1+\eta(x)}}P_x\left(n-\delta\leq d(x,X_t)\leq n+\delta\right)^{\frac{1}{1+\eta(x)}}\\ &\leq \epsilon^{-1}P_x\left(n-\delta\leq d(x,X_t)\leq n+\delta\right)^{\frac{1}{1+\eta(x)}} ,
\end{align*}
for $t<t_1$.

Now for any $t<t_1\wedge t_0\wedge T$, we have
\begin{align*}&\left(\sup_{y\in B(x,n)}|f(y)|\right)P_x(\tau\leq t)^{\frac{1}{1+\eta(x)}}\\&\leq \frac{1}{\epsilon}\left(\sup_{y\in B(x,n)}|f(y)|\right)P_x\left(n-\delta\leq d(x,X_t)\leq n+\delta\right)^{\frac{1}{1+\eta(x)}}
\end{align*}
which tends to $0$ as $n\rightarrow\infty$ by assumption $(B)$ of Theorem \ref{t.337}.  Therefore, applying Proposition \ref{p.298} with $p=1+\eta(x)$ implies that $(e^{tL}f)(x)=f(x)$ for all $t<t_1\wedge t_0\wedge T$ if $Lf=0$, while $(e^{tL}f)(x)\geq f(x)$ for all $t<t_1\wedge t_0\wedge T$ if $Lf\geq0$.  This $t$ is independent of $x$, so Proposition \ref{p.299} gives that $e^{tL}f=f$ for all $0\leq t<T$ when $Lf=0$, while $e^{tL}f\geq f$ for all $0\leq t<T$ when $Lf\geq0$.  This proves statements $(1)$ and $(2)$ of Theorem \ref{t.337}.  Statement $(3)$ follows from these statements and Theorem \ref{t.558}.  Specifically, if $0\leq s\leq t<T$ and $Lf=0$, then
\begin{equation}\label{e.3030}
\int_M|f(y)|^p\mu_s^x(dy)=\int_M|(e^{(t-s)L}f)(y)|^p\mu_s^x(dy)\leq \int_M |f(y)|^p\mu_t^x(dy)
\end{equation}
and
\begin{equation}\label{e.3040}
\int_M|f(y)|^p\mu_t^x(dy)=\int_M|(e^{(T-t)L}f)(y)|^p\mu_t^x(dy)\leq \int_M |f(y)|^p\mu_T^x(dy),
\end{equation}
where the first equalities in (\ref{e.3030}) and (\ref{e.3040}) follow from $(1)$ of Theorem \ref{t.337} and the second equalities from Theorem \ref{t.558}.  If $Lf\geq 0$ and $f\geq 0$, then $|f(y)|\leq |(e^{sL}f)(y)|$ by $(2)$ and first equalities (\ref{e.3030}) and (\ref{e.3040}) are replaced by inequalities.
\end{proof}

\subsection{The Laplace--Beltrami on Riemannian Manifolds with Ricci Curvature Bounded from Below}\label{s.3.2}

Let $M$ denote a Riemannian manifold of dimension $D>1$ with Ricci curvature bounded from below, i.e. there exits a $K\geq 0$ such that $Ric \geq -K$.  Some necessary estimates will be phrased in terms of the non-negative quantity $k=\frac{K}{(D-1)}$.  Let $d$ denote the Riemannian distance and $V$ the Riemannian volume measure.  Let $\{X_t\}_{t\geq 0}$ denote a Brownian motion on $M$ with generator $\frac{1}{2}\Delta$, where $\Delta$ denotes the Laplace--Beltrami operator.  The transition probabilities $\mu_t^x$ are absolutely continuous with respect to the Riemannian volume measure; the density is the heat kernel $\mu_t(x,y):=\frac{\mu_t^x(dy)}{V(dy)}$.  It is well known that such a manifold is stochastically complete.  Example \ref{e.4.34} indicates that $\{X_t\}_{t\geq 0}$ is uniformly local with parameter $\delta=1$.

Let $f\in C^2(M)$ such that $\Delta f\geq 0$ and
\[\mathbb{E}_o[|f(X_T)|^p]=\int_M |f(y)|^p\mu_T(o,y)V(dy)<\infty\]
for some fixed $o\in M$ and $1<p<\infty$.  In this subsection, we will show that the conditions $(A)$ and $(B)$ of Theorem \ref{t.337} apply to such an $f$, which, along with an addition argument when $t=T$, amounts to a proof of Theorem \ref{t.888}.

\begin{remark}
Many of our references for estimates on the heat kernel consider the heat equation $\frac{\partial u}{\partial t}=\Delta u$, which differs by a factor of $\frac{1}{2}$ from our heat equation.  As a result, the estimates below differ from the statements in the references by this factor.
\end{remark}

We begin by recalling the Li--Yau Harnack inequality as stated in Theorem 5.3.5 of Davies \cite{davies_heatkernels} with $\alpha=2$ applied to the function $u(t,\cdot)=\mu_t(\cdot,y)$:  for any $0<t<T$ and $x,z\in M$,
\[0\leq \mu_t(x,y)\leq \mu_T(z,y)\left(\frac{T}{t}\right)^D\exp{\left(\frac{d(x,z)^2}{T-t}+\frac{DK(T-t)}{4}\right)}.\]
This implies that the function $y\rightarrow \frac{\mu_t(x,y)}{\mu_T(z,y)}$ is bounded for any $x,z\in M$.  It follows that for any $x\in M$ and $0<t<T$,
\begin{align*}
\int_M |f(y)|^p\mu_t(x,y)V(dy) &=\int_M |f(y)|^p\left(\frac{\mu_t(x,y)}{\mu_T(o,y)}\right)\mu_T(o,y)V(dy)\\&<C(x)\int_M |f(y)|^p\mu_T(o,y)V(dy)\\& <\infty.
\end{align*}
Therefore condition $(A)$ of Theorem \ref{t.337} is satisfied for all $0< t<T$ with $\eta(x)=p-1>0$.  When $t=0$, condition $(A)$ is satisfied as well since $f$ is everywhere finite.

To see that condition $(B)$ of Theorem \ref{t.337} is satisfied, we begin with an inequality from Theorem 2.1 of Li \& Schoen \cite{li_schoen}.  Let $V(x,r)$ denotes the volume of the geodesic ball of radius $r$ centered at $x\in M$.  There exist constants $\alpha$ and $C$ which depend on $k$ but not $x$ such that for any non-negative subharmonic function $f$
\[\sup_{y\in B(x,r)}|f(y)|\leq Ce^{\alpha r}V(x,2r)^{-1}\int_{B(x,2r)}|f(y)|V(dy).\]
We modify this inequality to the following: for any positive integer $n$
\begin{align*}
\sup_{y\in B(x,n)}|f(y)|&\leq Ce^{\alpha n}V(x,2n)^{-1}\int_{B(x,2n)}|f(y)|\frac{\mu_T(x,y)}{\mu_T(x,y)}V(dy)\\ &\leq \frac{Ce^{\alpha n}}{V(x,2n)\left(\inf_{y\in B(x,2n)}\mu_T(x,y)\right)}\int_{B(x,2n)}|f(y)|\mu_T(x,y)V(dy)\\ &\leq \frac{Ce^{\alpha n}}{V(x,2)\left(\inf_{y\in B(x,2n)}\mu_T(x,y)\right)}||f||_{L^1(M,\mu^x_T)}.
\end{align*}
The lower bounds of Davies and Mandouvalos \cite{davies_mandouvalos} on hyperbolic space along with the Cheeger and Yau's comparison (see Wang \cite{wang_lower}) allows one to obtain a lower bound on the heat kernel
\begin{equation}\label{e.800}
\inf_{y\in B(x,2n)}\mu_T(x,y)\geq C(D,T)\exp{\left(-\frac{2n^2}{T}-\frac{(D-1)^2kT}{8}-(D-1)\sqrt{k}n\right)}.
\end{equation}
Putting the above estimates together, we see that there are constants $c_1,c_2,c_3,c_4$ (independent of $x$) such that for any non-negative subharmonic function $f$,
\begin{equation}\label{e.444}
\sup_{y\in B(x,n)}|f(y)|\leq c_1V(x,2)^{-1}\exp{\left(\frac{c_2n^2}{T}+c_3n+c_4T\right)}||f||_{L^1(\mu^x_T)}.
\end{equation}
Now for $t<T$ and $n>1$,
\begin{align*}
P_x(n-1\leq d(X_t,x)\leq n+1) &\leq \left(\sup_{y\in B(x,n+1)}\mu_t(x,y)\right)V(x,n+1).
\end{align*}
We use upper bounds on the heat kernel due to Li and Yau \cite{li_yau} as stated in Davies \cite{davies_state} with $\delta=\frac{1}{2}$:
\[\mu_t(x,y)\leq c_5V(x,(t/2)^{1/2})^{-1/2}V(y,(t/2)^{1/2})^{-1/2}\exp{\left(\frac{t}{4}-\frac{4d(x,y)^2}{9t}\right)}.\]
The relative volume comparison gives constants $c_6,c_7$ such that for all $r>0$,
\[V(x,r)\leq V(y,d(x,y)+r)\leq c_6V(y,r)r^{-D}\exp{\left(c_7\left(d(x,y)+r\right)\right)}.\]
This allows us to rewrite the heat kernel upper bounds as
\begin{align}
\mu_t(x,y)&\leq c_5c_6^{1/2}V(x,(t/2)^{1/2})^{-1}\left(\frac{t}{2}\right)^{-D/4}\nonumber\\& \times \exp{\left(\frac{t}{4}+\frac{c_7t^{1/2}}{\sqrt{8}}-\frac{4d(x,y)^2}{9t}+\frac{c_7d(x,y)}{2}\right)}\label{e.303}
\end{align}
By Bishop's comparison theorem, there are constants $c_8$ and $c_{9}$ depending on $k$ such that $V(x,r)\leq c_8e^{c_{9}r}$, and so if we set
\[C(x,t):=c_5c_6^{1/2}c_8V(x,(t/2)^{1/2})^{-1}\left(\frac{t}{2}\right)^{-D/4}\exp{\left(\frac{t}{4}+\frac{c_7t^{1/2}}{\sqrt{8}}\right)}\]
then
\begin{align*}
&P_x(n-1\leq d(X_t,x)\leq n+1) \\& \leq C(x,t) \exp{\left(-\frac{4}{9t}(n+1)^2+\frac{c_7+2c_9}{2}(n+1)\right)}\\ &=C(x,t)\exp{\left(\frac{c_7+2c_9}{2}-\frac{4}{9t}\right)}\exp{\left(-\frac{4}{9t}n^2+\left(\frac{c_7+2c_9}{2}-\frac{8}{9t}\right)n\right)}.
\end{align*}

By combining the estimate above with (\ref{e.444}), we see that there are constants $c_{10}$ and $c_{11}$, which depend on $x,p$ and $t$ but not $n$, such that \[\left(\sup_{y\in B(x,n)}|f(y)|\right)P_x(n-1\leq d(X_t,x)\leq  n+1)^{\frac{1}{p}}\leq c_{10}\exp{\left(\left(\frac{c_2}{T}-\frac{4}{9p t}\right)n^2+c_{11}n\right)}.\]
Set $t_0:=\frac{4T}{9p c_2}$.  If $t<t_0$, then $\frac{c_2}{T}-\frac{4}{9p t}<0$ and
\[\left(\sup_{y\in B(x,n)}|f(y)|\right)P_x(n-1\leq d(X_t,x)\leq  n+1)^{\frac{1}{p}}\rightarrow 0\qquad\text{ as }n\rightarrow\infty.\]
Therefore, condition $(B)$ of Theorem \ref{t.337} is satisfied.

Since we assume that $f\in L^p(M,\mu_T^o)$ for $1<p<\infty$, the following proposition indicates that $f\in L^r(M,\mu_{T+\epsilon}^o)$ for some small $\epsilon>0$ and $1<r<p$.  Therefore, an application of the results proved above show that, since $T<T+\epsilon$, $e^{T\Delta/2}f=f$ when $f$ is harmonic while $e^{T\Delta/2}f\geq f$ when $f$ is $C^2$ and subharmonic, and the proof of Theorem \ref{t.888} is complete.
\begin{proposition}
Suppose $f\in L^p(M,\mu_T^o)$ for some $1<p<\infty$.  Then for any $1<r<p$, there exists an $\epsilon>0$ such that $f\in L^r(M,\mu_{T+\epsilon}^o)$.
\end{proposition}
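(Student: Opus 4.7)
The plan is to combine an integrated Wang-type Harnack inequality with the semigroup property of the heat kernel and the standard Gaussian short-time estimates for Brownian motion. The semigroup identity $\mu_{T+\epsilon}^o(dy) = \int_M \mu_T^x(dy)\,\mu_\epsilon^o(dx)$ (an instance of Eq. (\ref{e.markov})) yields
\begin{equation*}
\int_M |f|^r\,\mu_{T+\epsilon}^o(dy) = \int_M (e^{T\Delta/2}|f|^r)(x)\,\mu_\epsilon^o(dx).
\end{equation*}

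The next step is to control $(e^{T\Delta/2}|f|^r)(x)$ pointwise. I apply the integrated Wang-type Harnack inequality from Driver--Gordina (the same inequality used to derive Corollary \ref{c.388}) to $g := |f|^r$, with the role of the $L^p$ exponent in that bound taken to be $p/r > 1$. Since $\|g\|_{L^{p/r}(\mu_T^o)} = \|f\|_{L^p(\mu_T^o)}^r < \infty$, this yields a pointwise bound of the form
\begin{equation*}
(e^{T\Delta/2}|f|^r)(x) \leq \|f\|_{L^p(\mu_T^o)}^r\,\exp\bigl(C\,d(x,o)^2\bigr),
\end{equation*}
where $C = K((p/r)-1)/(2(1-e^{-KT}))$ when $K>0$ and $C = ((p/r)-1)/(2T)$ when $K=0$; in either case $C$ is an explicit finite constant depending only on $K$, $p$, $r$, and $T$ (and not on $\epsilon$). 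Combining the two displays,
\begin{equation*}
\int_M |f|^r\,\mu_{T+\epsilon}^o(dy) \leq \|f\|_{L^p(\mu_T^o)}^r \int_M \exp\bigl(C\,d(x,o)^2\bigr)\,\mu_\epsilon^o(dx).
\end{equation*}

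The final step is to show that $\int_M \exp(C\,d(x,o)^2)\,\mu_\epsilon^o(dx) < \infty$ for all sufficiently small $\epsilon>0$. Combining the Li--Yau Gaussian upper bound (cf.\ Eq. (\ref{e.303}), with the Li--Yau parameter chosen so that the coefficient of $d(o,y)^2/\epsilon$ in the exponent is arbitrarily close to $1/2$) with the Bishop--Gromov volume comparison $V(B(o,r)) \leq c e^{cr}$, the integrand decays like $\exp(-d(x,o)^2/((2+\delta)\epsilon))$ up to lower-order factors, so the integral converges provided $\epsilon < 1/((2+\delta)C)$. Since $C$ is finite for any fixed $1<r<p$, such $\epsilon > 0$ exists. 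The one point requiring care is the dependence of the admissible $\epsilon$ on $r$ (the allowed $\epsilon$ shrinks as $r \to p^-$), but no substantive obstacle arises.
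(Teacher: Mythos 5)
Your argument is correct, but it takes a genuinely different route from the paper's. The paper proves the proposition by a single application of H\"older's inequality: it writes $\int_M |f|^r\,d\mu_{T+\epsilon}^o = \int_M |f|^r\,\frac{\mu_{T+\epsilon}(o,\cdot)}{\mu_T(o,\cdot)}\,d\mu_T^o$, bounds the first factor by $\Vert f\Vert_{L^p(\mu_T^o)}^r$, and then shows directly that the density ratio $\mu_{T+\epsilon}(o,\cdot)/\mu_T(o,\cdot)$ lies in $L^q(M,\mu_T^o)$ for the conjugate exponent $q$ of $p/r$, by playing the Li--Yau Gaussian \emph{upper} bound on $\mu_{T+\epsilon}$ against the Cheeger--Yau/Davies--Mandouvalos \emph{lower} bound (\ref{e.800}) on $\mu_T$ and choosing $\delta,\epsilon$ so that the exponents balance. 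You instead split the time interval via the semigroup identity, push the entire burden of controlling $e^{T\Delta/2}|f|^r$ onto the integrated Wang--Harnack inequality of Driver--Gordina (applied with exponent $p/r$), and are then left only with the Gaussian integrability of $\exp(C\,d(\cdot,o)^2)$ against the short-time measure $\mu_\epsilon^o$. The trade-off is clear: your route avoids any heat kernel lower bound, but imports the integrated Harnack inequality, which is a deeper input (though one the paper already invokes for Corollary \ref{c.388}, so it is available); the paper's route is more self-contained relative to the kernel estimates already assembled in Section \ref{s.3.2}, at the cost of the slightly fiddly exponent bookkeeping in $\mu_{T+\epsilon}^q/\mu_T^{q-1/2}$. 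Both arguments ultimately rely on the same final integrability mechanism (Gaussian decay beating quadratic-exponential growth plus exponential volume growth), and both correctly identify that the admissible $\epsilon$ degenerates as $r\to p^-$. One small caveat: you should state the integrated Harnack bound with the exponent convention actually proved in \cite{driver_gordina} (the constant there is expressed in terms of the conjugate exponent of $p/r$, not $p/r$ itself), but since that constant is finite for every fixed $1<r<p$, this does not affect the validity of your conclusion.
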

\begin{proof}
The proof is elementary and relies of the fact that we have upper and lower bounds on the heat kernel of essentially the same exponential order.  We observe first that the function $|f|^r\in L^\frac{p}{r}(M,\mu_T^o)$ since $|||f|^r||_{L^{\frac{p}{r}}(M,\mu_T^o)}=||f||_{L^p(M,\mu_T^o)}^r$.  It follows that
\begin{align}
\int_M |f(y)|^r\mu_{T+\epsilon}(o,y)V(dy)&=\int_M |f(y)|^r\frac{\mu_{T+\epsilon}(o,y)}{\mu_T(o,y)}\mu_T(o,y)V(dy)\nonumber\\&\leq |||f|^r||_{L^{\frac{p}{r}}(M,\mu_T^o)}||\frac{\mu_{T+\epsilon}(o,\cdot)}{\mu_T(o,\cdot)}||_{L^q(M,\mu_T^o)}\label{e.440}
\end{align}
where $q>1$ denotes the conjugate exponent to $\frac{p}{r}>1$.  Now pick $0<\delta<1$ and $\epsilon>0$ such that
\[\frac{4T}{(4+\delta)(T+\epsilon)}>\frac{q-1/2}{q}.\]
Note that one can always find such a $\delta$ and $\epsilon$ since the right-hand side of the above is less than one while the left-hand side increases to one as $\delta$ and $\epsilon$ both tend towards zero.  By the heat kernel bounds of Li and Yau \cite{li_yau} as stated in Davies \cite{davies_state},
\[\mu_{T+\epsilon}(o,y)\leq C\exp{\left(-\frac{2d(o,y)^2}{(4+\delta)(T+\epsilon)}\right)},\]
where the constant $C$ depends on $T+\epsilon, o, D, k$ and also exponentially on $d(o,y)$ (see line (\ref{e.303}) above).  The heat kernel lower bounds (\ref{e.800}) above imply that
\[\mu_T(o,y)\geq \tilde{C}\exp{\left(-\frac{d(o,y)^2}{2T}\right)},\]
where again $\tilde{C}$ depends on many factors, but only at most exponentially in $d(o,y)$.  For our chosen $\delta$ and $\epsilon$, these heat kernel bounds imply that the function $y\rightarrow \frac{\mu_{T+\epsilon}(o,y)^{q}}{\mu_T(o,y)^{q-1/2}}$ tends to zero as $d(o,y)\rightarrow \infty$; in particular, it is globally bounded by some constant $K$.  It follows that
\begin{align*}
||\frac{\mu_{T+\epsilon}(o,\cdot)}{\mu_T(o,\cdot)}||_{L^q(M,\mu_T^o)}^q&=\int_M \frac{\mu_{T+\epsilon}(o,y)^{q}}{\mu_T(o,y)^{q-1/2}}\mu_T(o,y)^{1/2}V(dy)\\&\leq K\int_M \mu_T(o,y)^{1/2}V(dy)\\&<\infty,
\end{align*}
and hence $f\in L^r(M,\mu_{T+\epsilon}^o)$ by line (\ref{e.440}).
\end{proof}
\subsection{Subelliptic Operators on Lie Groups}\label{s.3.3}

Suppose $M=G$ is a finite dimensional Lie group with a left-invariant Riemannian metric.  Let $\lambda$ denote a right-invariant Haar measure.  Suppose $\{b_t^k\}_{t\geq 0, 1\leq k\leq d}$ is a collection of independent $\mathbb{R}$-valued Brownian motions and let $\{X^x_t\}_{t\geq 0}$ denote the solution to the $G$-valued stochastic differential equation
\[dX_t^x=L_{X_t^x*}\left(\sum_{k=1}^d Y_k\delta b^k_t\right)\qquad X_0^x=x\text{ a.s.}\]
where $t\rightarrow \delta b_t^k$ denotes the Stratonovich differential, $\{Y_k\}_{k=1}^d\subset\mathfrak{g}=\operatorname{Lie}(G)$, and $L_{g*}$ denotes the differential of left translation $L_g(x)=gx$.   The process $\{X_t\}_{t\geq 0}$ has infinitesimal generator
\[L=\frac{1}{2}\sum_{k=1}^d \widetilde{Y_k}^2,\]
where $\widetilde{Y}$ denotes the left-invariant extension of $Y\in \mathfrak{g}$.  We also assume that $\{Y_k\}_{k=1}^d$ satisfies H\"{o}rmander's condition, i.e. $H$, the linear span of $\{Y_k\}_{k=1}^d$, generates $\mathfrak{g}$ under iterated Lie brackets.  Under this assumption, the left-invariant operator $L$ is hypoelliptic and the measure $\mu_t^x(E)=P_x(X_t\in E)$ has a smooth positive density with respect to $\lambda$, the heat kernel, which we denote by $\mu_t(x,y)=\frac{\mu_t^x(dy)}{\lambda(dy)}$.  The heat kernel is left-invariant, i.e. $\mu_t(x,y)=\mu_t(gx,gy)$ for all $x,y,g\in G$.

Let $d_H:G\times G\rightarrow\mathbb{R}$ denote the horizontal distance on $G$.  $d_H$ is defined by
\[d_H(x,y)=\inf_\gamma\left(\int_0^1 |L_{\gamma(t)^{-1}*}\gamma'(t)|dt\right),\]
where the infimum is taken over all horizontal paths, that is, all absolutely continuous paths $\gamma:[0,1]\rightarrow G$ such that $\gamma(0)=x$, $\gamma(1)=y$, and $L_{\gamma(t)^{-1}*}\gamma'(t)\in H$ for all $t\in (0,1)$.  The quantity
\[l_H(\gamma)=\int_0^1 |L_{\gamma(t)^{-1}*}\gamma'(t)|dt\]
is the horizontal length of $\gamma$.  We consider balls of radius $r>0$
\[B(x,r):=\{y\in G|d_H(x,y)\leq r\}\]
It is known that $B(x,r)$ is compact for all $x$ and $r\geq 0$.  As before, we will let $V(x,r)$ denote the volume of $B(x,r)$.  If $d(x,y)$ denotes the Riemannian distance between $x$ and $y$, then $d(x,y)\leq d_H(x,y)$.  If follows that $B(x,r)$ is contained within the Riemannian ball of radius $r$; in particular, there exists constants $b,k$ such that
\[V(x,r)\leq be^{kr}\]
for all $x\in G$.

If we fix $t>0$, and set
\[\epsilon=\inf_{0\leq s<t}\left(\int_{B(x,1)} \mu_s(x,y)\lambda(dy)\right)>0\]
for some $x\in G$, then by the comments of Example (\ref{e.4.33}), $\{X_t\}_{t\geq 0}$ is uniformly local with parameter $\delta=1$.

Suppose $L f=0$ and
\[\mathbb{E}_o[|f(X_T)|^p]=\int_G |f(y)|^p\mu_T(o,y)\lambda (dy)<\infty\]
for some fixed $o\in G$ and $1< p<\infty$.  Note that since $L$ is hypoellipitic, $f$ is necessarily smooth.  In this subsection, we will show that the conditions $(A)$ and $(B)$ of Theorem \ref{t.337} apply to such an $f$, which amounts to a proof of Theorem \ref{t.1.4}.

We first recall some necessary upper and lower bounds for the heat kernel.  For proofs and additional references, we refer the reader to Section 3 of Driver, Gross, \& Saloff-Coste \cite{dgsc_subelliptic}.  It should be noted, however, that \cite{dgsc_subelliptic} considers kernels corresponding to the operator $\frac{1}{2}L$, and so some statements below differ slightly from those in \cite{dgsc_subelliptic}.
\begin{proposition}\label{p.346}
For all $\kappa\in (0,1)$, there exists a positive constant $C_\kappa$ and an integer $\nu$ such that for all $x,y\in G$ and $t>0$
\[\mu_t(x,y)\leq C_\kappa\left(1+\frac{1}{2t}\right)^{\nu/2}e^{C_\kappa t}e^{-\kappa d_H(x,y)^2/2t}.\]
Also, there exists positive constants $C,c$ such that for all $x,y\in G$ and $t> 0$,
\[\mu_t(x,y)\geq c\left(1+\frac{1}{2t}\right)^{\nu/2}e^{-Ct}e^{-Cd_H(x,y)^2/2t}.\]
\end{proposition}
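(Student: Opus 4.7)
The plan is to deduce both bounds from two ingredients: an on-diagonal ultracontractive estimate for the subelliptic semigroup, and Davies' exponential perturbation method to insert the horizontal-distance Gaussian factor. The on-diagonal bound I would first establish is of the form $\|e^{tL}\|_{L^1\to L^\infty}\leq C(1+1/(2t))^{\nu/2}e^{Ct}$, with $\nu$ the "local dimension" attached to the H\"ormander system. The cleanest route is via a local Nash (or Sobolev) inequality for the Dirichlet form $\mathcal{E}(f,f)=\tfrac{1}{2}\int\sum_k(\widetilde{Y}_kf)^2\,d\lambda$; this uses only the doubling and Poincar\'e inequalities for horizontal balls, which in the H\"ormander setting on a Lie group are classical (Nagel--Stein--Wainger volume estimates and Jerison's subelliptic Poincar\'e inequality). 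The exponential correction $e^{Ct}$ accounts for the fact that volumes of horizontal balls can grow exponentially, via $V(x,r)\leq be^{kr}$ from the text.

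For the upper Gaussian bound I would apply the Davies trick. Fix a smooth $\phi\colon G\to\mathbb{R}$ with horizontal gradient bounded by $1$ in the sense $\sum_k(\widetilde{Y}_k\phi)^2\leq 1$, and consider the conjugated semigroup $P_t^\phi f:=e^{-\alpha\phi}e^{tL}(e^{\alpha\phi}f)$ for $\alpha>0$. A direct computation on the quadratic form shows that $\tfrac{d}{dt}\|P_t^\phi f\|_2^2\leq \alpha^2\|P_t^\phi f\|_2^2$, so $\|P_t^\phi\|_{L^2\to L^2}\leq e^{\alpha^2 t/2}$. Combining this with the ultracontractive bound applied to the conjugated semigroup yields a kernel estimate of the form
\[
\mu_t(x,y)\leq C(1+1/(2t))^{\nu/2}e^{Ct}\,e^{\alpha(\phi(y)-\phi(x))+\alpha^2 t/2}.
\]
Taking the infimum over admissible $\phi$ replaces $\phi(y)-\phi(x)$ by $-d_H(x,y)$, and optimizing in $\alpha$ gives the off-diagonal factor $e^{-d_H(x,y)^2/2t}$; the factor $\kappa<1$ in the statement is the usual small loss one pays either in the Cauchy--Schwarz step bounding the cross-term in $\tfrac{d}{dt}\|P_t^\phi f\|_2^2$, or by interpolating an $L^1$--$L^\infty$ bound from an $L^2$--$L^2$ bound, either of which requires absorbing $(1-\kappa)\alpha^2 d_H^2/2t$ back into the prefactor.

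For the lower bound I would use a chaining argument powered by the semigroup (Chapman--Kolmogorov) identity and an on-diagonal lower bound. First, derive a matching on-diagonal lower estimate $\mu_t(x,x)\geq c(1+1/(2t))^{\nu/2}e^{-Ct}$ from the Gaussian upper bound together with mass conservation $\int\mu_t(x,y)d\lambda(y)=1$ and the volume bound $V(x,r)\leq be^{kr}$: the upper bound forces most of the mass of $\mu_t(x,\cdot)$ into a ball of controlled radius, and left-invariance plus a standard Cauchy--Schwarz argument then give the on-diagonal lower bound. Next, fix a (nearly) length-minimizing horizontal path from $x$ to $y$ and choose intermediate points $x=z_0,z_1,\dots,z_n=y$ along it with $d_H(z_{i-1},z_i)\approx d_H(x,y)/n$; iterating $\mu_t(x,y)\geq\int_{B(z_1,r)}\cdots\int_{B(z_{n-1},r)}\prod_{i=1}^n\mu_{t/n}(z_{i-1}',z_i')\,d\lambda$ with $r\sim\sqrt{t/n}$ and applying the on-diagonal lower bound on each factor, then optimizing $n\approx d_H(x,y)^2/t$, produces the Gaussian lower factor $e^{-Cd_H(x,y)^2/2t}$ (with a larger constant $C$, as in the statement).

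The main obstacle is the ultracontractive on-diagonal bound with the correct dimensional exponent $\nu/2$: in the subelliptic setting, neither the Riemannian volume growth nor the Riemannian Sobolev inequality is the right tool. One must work with the control metric $d_H$ and horizontal balls, and the Nash inequality must be established for the subelliptic Dirichlet form. On a general H\"ormander system this is delicate, but in the left-invariant Lie group setting it is tractable: either via Varopoulos--Saloff-Coste--Coulhon applied to the horizontal Dirichlet form and its associated heat semigroup on $(G,\lambda,d_H)$, or via Rothschild--Stein's lifting to a free nilpotent group where the homogeneous structure produces the Nash inequality directly. Once that ingredient is in hand, the Davies perturbation and the chaining argument above deliver both bounds in Proposition \ref{p.346}.
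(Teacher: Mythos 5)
The paper does not prove this proposition at all: it is quoted verbatim (up to the factor of $\tfrac12$ in the generator) from Section 3 of Driver--Gross--Saloff-Coste \cite{dgsc_subelliptic}, and the text explicitly refers the reader there. So the relevant comparison is between your sketch and the proof that lives in the cited literature --- and your outline is essentially that proof: local Nash/Sobolev from the Nagel--Stein--Wainger volume estimates and Jerison's Poincar\'e inequality gives ultracontractivity with the exponent $\nu/2$ and the $e^{Ct}$ correction for exponential volume growth; Davies' perturbation with subunit-gradient weights gives the Gaussian upper bound with the $\kappa<1$ loss; and a chaining (Chapman--Kolmogorov) argument gives the lower bound. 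Your computation of $\tfrac{d}{dt}\|P_t^\phi f\|_2^2\le\alpha^2\|P_t^\phi f\|_2^2$ is correct, and the optimization $\inf_\alpha(-\alpha d_H+\alpha^2t/2)=-d_H^2/2t$ is the right bookkeeping. One simplification you do not exploit: the kernel is left-invariant, $\mu_t(x,y)=\mu_t(e,x^{-1}y)$, so all the uniformity in $x$ (which is otherwise delicate on a non-unimodular group, where $V(x,r)$ genuinely depends on $x$ through the modular function) reduces to estimates at the identity.

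Two points in your sketch are thinner than they should be. First, for the upper bound the $L^2\to L^2$ estimate on $P_t^\phi$ alone does not produce a pointwise kernel bound; you need the full Davies iteration (an $L^1\to L^2$ and $L^2\to L^\infty$ ultracontractive bound for the \emph{perturbed} semigroup, obtained from the Nash or log-Sobolev inequality applied to the perturbed form), and that is precisely where the loss $\kappa<1$ enters --- you gesture at this but it is the substantive step, not a footnote. Second, your chaining argument requires a \emph{near}-diagonal lower bound $\mu_{t/n}(u,v)\ge c(t/n)^{-\nu/2}e^{-C}$ for $d_H(u,v)\lesssim\sqrt{t/n}$, which is strictly stronger than the on-diagonal bound you derive from mass conservation and Cauchy--Schwarz; passing from on-diagonal to near-diagonal needs the local parabolic Harnack inequality (available here by the doubling-plus-Poincar\'e characterization, and in fact stated as Proposition \ref{p.347} in the paper) or an equivalent continuity argument. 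Both gaps are fillable by standard means, so the strategy is sound, but as written these are the two places where the argument does not yet close.
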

\begin{proposition}\label{p.347}
There exists a constant $K>0$ such that for $0<s<t$,
\[\mu_s(x,y)\leq \mu_t(z,y)\exp{\left(K\left(\frac{t}{s}+\frac{d_H(x,z)^2}{2(t-s)}\right)\right)}.\]
\end{proposition}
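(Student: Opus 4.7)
The statement is a parabolic Harnack inequality of Li--Yau type for the subelliptic heat kernel on $G$. Setting $u(r,w) := \mu_r(w,y)$ for a fixed $y$, the function $u>0$ solves $\partial_r u = L u$, and the claim becomes the spacetime comparison
\[
u(s,x)\leq u(t,z)\exp\!\left(K\left(\frac{t}{s}+\frac{d_H(x,z)^2}{2(t-s)}\right)\right), \qquad 0<s<t.
\]
The natural strategy is to integrate a subelliptic differential Harnack estimate along a constant-speed horizontal path, in the spirit of Li--Yau.

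To carry this out, I would first establish (or cite from the literature on subelliptic heat semigroups on Lie groups, e.g.\ via the $\Gamma_2$-calculus of Baudoin--Garofalo) a differential estimate of the form
\[
\frac{|\nabla_H u|^2}{u^2}-\alpha\,\frac{\partial_r u}{u}\leq \frac{K_2}{r}+K_3
\]
for constants $\alpha\ge 1$ and $K_2,K_3\ge 0$ depending only on $G$ and the H\"ormander frame $\{Y_k\}$. Next, I would fix a horizontal curve $\gamma:[s,t]\to G$ from $x$ to $z$ of constant horizontal speed, so that $\int_s^t |\dot\gamma_H|^2\,dr = d_H(x,z)^2/(t-s)$. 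Applying the chain rule, the elementary bound $\nabla_H\log u\cdot\dot\gamma_H\geq -|\nabla_H\log u|^2/\alpha-\alpha|\dot\gamma_H|^2/4$, and the differential Harnack above yields
\[
\frac{d}{dr}\log u(r,\gamma(r))\geq -\frac{K_2}{\alpha r}-\frac{K_3}{\alpha}-\frac{\alpha}{4}|\dot\gamma_H(r)|^2.
\]
Integrating from $s$ to $t$ and exponentiating gives
\[
u(s,x)\leq u(t,z)\,(t/s)^{K_2/\alpha}\,e^{K_3(t-s)/\alpha}\exp\!\left(\frac{\alpha\,d_H(x,z)^2}{4(t-s)}\right).
\]
Since $\log(t/s)\leq t/s$, the polynomial prefactor is absorbed into $\exp((K_2/\alpha)(t/s))$, and the distance factor already has the shape required in the target inequality (with $K\geq\alpha/2$).

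The main obstacle is the surviving factor $e^{K_3(t-s)/\alpha}$, which is \emph{not} controlled by $\exp(K\,t/s)$ in the regime where $s$ and $t$ are both large and comparable. I would address this via a case split on $t/s$: in the regime $s\leq 1$ (so $t/s\geq t$), the whole argument goes through with $K\geq \max(K_2/\alpha,\,K_3/\alpha,\,\alpha/2)$ because $e^{K_3 t}\leq e^{K_3(t/s)}$; in the regime $s\geq 1$ and $t/s$ close to $1$, I would fall back on the two-sided Gaussian bounds of Proposition \ref{p.346} directly, combining the upper bound for $\mu_s$ with the lower bound for $\mu_t$ and decoupling $d_H(z,y)^2$ from $d_H(x,y)^2$ via $d_H(z,y)^2\leq (1+\eta)d_H(x,y)^2+(1+1/\eta)d_H(x,z)^2$ with a suitable $\eta>0$, so that the surviving coefficient of $d_H(x,z)^2$ is bounded by a constant multiple of $1/(t-s)$ and the remaining prefactors stay uniformly bounded. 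Synthesizing these two regimes into a single universal constant $K$ is the bookkeeping core of the proof.
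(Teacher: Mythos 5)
Your reduction to a space--time path integration is mechanically sound, but the proof has two genuine gaps. First, the key input --- a Li--Yau differential Harnack estimate of the form $|\nabla_H u|^2/u^2-\alpha\,\partial_r u/u\leq K_2/r+K_3$ --- is not available at the level of generality Proposition \ref{p.347} requires. The proposition is asserted for an arbitrary left-invariant H\"{o}rmander system $\{Y_k\}$ on an arbitrary Lie group, whereas gradient estimates of Li--Yau type need a curvature-dimension hypothesis; the generalized curvature-dimension inequality of Baudoin--Garofalo, which you propose to cite, is verified only for special groups (the paper itself imposes it as an \emph{additional} hypothesis in Corollary \ref{c.389} precisely because it does not follow from H\"{o}rmander's condition), and even where it holds the resulting differential Harnack carries the vertical form $\Gamma^Z$ and is not of the clean form you integrate along a horizontal curve. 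Second, your fallback for the regime $s\geq 1$, $t/s$ near $1$ does not close. Combining the upper bound for $\mu_s$ with the lower bound for $\mu_t$ from Proposition \ref{p.346} leaves a prefactor $e^{C_\kappa s+Ct}$, which is unbounded as $t\to\infty$ with $t/s$ bounded and so cannot be absorbed into $\exp(Kt/s)$; moreover, after the decoupling $d_H(z,y)^2\leq(1+\eta)d_H(x,y)^2+(1+1/\eta)d_H(x,z)^2$, eliminating the $d_H(x,y)^2$ term requires $C(1+\eta)/(2t)\leq\kappa/(2s)$, i.e.\ $t/s\geq C(1+\eta)/\kappa>1$ (recall $\kappa<1$ while the lower-bound constant $C$ need not be small), which fails exactly in the regime $t/s\approx 1$ where you invoke this argument. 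The proposition is genuinely stronger than what follows from the two one-sided Gaussian bounds, which is why it is stated separately.

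For comparison: the paper does not prove this proposition at all; it quotes it from Section~3 of Driver, Gross \& Saloff-Coste \cite{dgsc_subelliptic}. There the inequality is obtained not from a differential Harnack estimate but from the uniform \emph{local} parabolic Harnack inequality for invariant H\"{o}rmander-type operators --- a curvature-free consequence of local volume doubling and a local Poincar\'{e} inequality, made uniform over the group by left-invariance --- chained along a space--time path from $(s,x)$ to $(t,z)$; the factor $\exp(Kt/s)$ is exactly the cost of that chaining. If you want a self-contained argument valid for every H\"{o}rmander system, that is the route to take.
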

The parabolic Harnack inequality above (Proposition \ref{p.347}) implies that, for any $t<T$ and $x,o\in G$, the function $\frac{\mu_t(x,\cdot)}{\mu_T(o,\cdot)}$ is bounded.  Therefore, for any $0<t<T$ and $x\in M$,
\begin{align*}
\int_G |f(y)|^p\mu_t(x,y)\lambda(dy)&=\int_G |f(y)|^p\frac{\mu_t(x,y)}{\mu_T(o,y)}\mu_T(o,y)\lambda(dy)\\&\leq \exp{\left(K\left(\frac{T}{t}+\frac{d_H(x,o)^2}{2(T-t)}\right)\right)}\int_G |f(y)|^p\mu_T(o,y)\lambda(dy)\\&<\infty.
\end{align*}
Therefore, condition $(A)$ of Theorem \ref{t.337} is satisfied for all $0<t<T$ with $\eta(x)=p-1>0$.  When $t=0$, condition $(A)$ is satisfied as well since $f$ is everywhere finite.

To see that condition $(B)$ of Theorem \ref{t.337} is satisfied, we first obtain a pointwise bound on $L$-harmonic functions.
\begin{proposition}\label{p.592}
Suppose $Lf=0$ and $\int_G|f(y)|\mu_T(x,y)dy<\infty$ for some $x\in G$.  Then for any $\alpha>0$, there exists constants $D=D(\alpha,T) >0$ and $\beta>0$ such that
\[\sup_{y\in B(x,n)}|f(y)|\leq D e^{\frac{C}{T}(n+\alpha)^2+\beta(n+d_H(e,x))}||f||_{L^1(G,\mu_T^x)}.\]
\end{proposition}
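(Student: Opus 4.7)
My plan is to follow the approach used in the Riemannian setting (the derivation of (\ref{e.444})), adapted to the subelliptic Lie group context using the parabolic Harnack inequality (Proposition \ref{p.347}) in place of the Li--Schoen mean value inequality. First I would establish the equality $e^{tL}f=f$ for suitably small positive $t$, then apply Proposition \ref{p.347} to bound $\mu_t(y,z)$ by $\mu_T(x,z)$ times an explicit exponential factor for $y\in B(x,n)$, and integrate to obtain the pointwise bound.

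For the first task, since $L$ is hypoelliptic, $f$ is smooth and $\{f(X_s^y)\}_{s\geq 0}$ is a continuous local martingale for each $y\in G$. To invoke Proposition \ref{p.298} with $p=1+\eta$, I need $(\sup_{B(y,m)}|f|)\,P_y(\tau_{B(y,m)}\leq t)^{1/p}\to 0$ as $m\to\infty$. An a priori bound on $\sup_{B(y,m)}|f|$ can be obtained by combining $\|f\|_{L^1(G,\mu_T^x)}<\infty$ with the heat kernel lower bound of Proposition \ref{p.346}---which together control $\int_{B(y,m)}|f|\,d\lambda$---and then applying a local mean value inequality for $L$-harmonic functions on the Lie group (derivable via Moser iteration from H\"ormander's condition, or from the time-independent case of Proposition \ref{p.347}). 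Combined with the left-invariance of the process (Example \ref{e.4.33}), which gives Gaussian-type decay of $P_y(\tau_{B(y,m)}\leq t)$ in $m$ for small $t$, this allows the relevant product to vanish in the limit once $t$ is taken small enough relative to $T$.

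For the second task, with $f(y)=\int_G f(z)\mu_t(y,z)\,d\lambda(z)$ in hand, take absolute values and apply Proposition \ref{p.347} with $s=t$:
\[
|f(y)|\leq \int_G |f(z)|\mu_t(y,z)\,d\lambda(z) \leq \exp\!\left(K\!\left(\frac{T}{t}+\frac{d_H(y,x)^2}{2(T-t)}\right)\right)\|f\|_{L^1(G,\mu_T^x)}.
\]
For $y\in B(x,n)$ one has $d_H(y,x)\leq n$; the parameter $\alpha>0$ enters through the choice of $t=t(\alpha,T)$, tuning $t$ so as to produce the desired $(C/T)(n+\alpha)^2$ form at the cost of absorbing the dimensional factor $\exp(KT/t)$ and the $\alpha$-dependent slack into $D(\alpha,T)$. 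The additive linear term $\beta(n+d_H(e,x))$ accommodates the exponential-in-distance prefactors present in the heat kernel bounds (Proposition \ref{p.346}) and the volume growth $V(x,r)\leq be^{kr}$ used to compare base points. Taking $\sup$ over $y\in B(x,n)$ then yields the stated inequality.

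The main obstacle is the first task: without a prior pointwise estimate on $|f|$ we cannot directly apply Proposition \ref{p.298}, and the hypothesis is only $L^1$ integrability against $\mu_T^x$ (not $L^p$ for $p>1$). The bootstrap therefore requires either an off-the-shelf subelliptic mean value inequality to convert integrability into a pointwise bound, or a more delicate martingale argument that exploits only $L^1$ control combined with the Gaussian tail of the exit-time distribution. Once this step is settled, the remainder is a direct computation using Propositions \ref{p.346} and \ref{p.347} together with the volume growth estimate already recorded in this section.
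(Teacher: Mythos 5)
There is a genuine gap, and it is structural. The entire content of Proposition \ref{p.592} is precisely the step you defer to an unspecified ``off-the-shelf subelliptic mean value inequality'': the paper's proof consists of (i) the local $L^1$ mean value estimate $|f(e)|\leq D_0\|f\|_{L^1(B(e,\alpha),\lambda)}$ for $L$-harmonic functions (Corollary III.1.3 of \cite{vscc}, a consequence of hypoellipticity), transported to an arbitrary point by left translation, and (ii) the heat kernel \emph{lower} bound of Proposition \ref{p.346} to replace $\|f\|_{L^1(B(g,\alpha),\lambda)}$ by $e^{\frac{C}{2T}(d_H(x,g)+\alpha)^2}\|f\|_{L^1(G,\mu_T^x)}$. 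You mention exactly this combination inside your ``first task,'' but only as an auxiliary device for proving $e^{tL}f=f$ --- at which point the proposition is already proved and the remainder of your argument (Proposition \ref{p.298}, the exit-time estimates, and the parabolic Harnack inequality applied to $e^{tL}f=f$) is redundant. Worse, the detour is circular with respect to the architecture of the section: in the paper, Proposition \ref{p.592} is the input used to verify condition $(B)$ of Theorem \ref{t.337}, which is what ultimately yields $e^{tL}f=f$; you cannot assume that identity, or the machinery leading to it, while proving this proposition. Note also that Proposition \ref{p.347} has no usable ``time-independent case'' ($s\to t$ degenerates), and the hypothesis here is only $L^1$, so your fallback of a ``more delicate martingale argument'' has nothing to iterate on.

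You also misattribute the linear term $\beta(n+d_H(e,x))$. It does not come from exponential prefactors in Proposition \ref{p.346} (those bounds are Gaussian with no such prefactor) nor from volume growth (which does not enter this proof at all). It comes from the \emph{modular function}: the mean value inequality is stated at $e$ and transported by \emph{left} translation, while $\lambda$ is \emph{right}-invariant Haar measure, so $\|f\circ L_g\|_{L^1(B(e,\alpha),\lambda)}=m(g^{-1})\|f\|_{L^1(B(g,\alpha),\lambda)}$. One then needs the estimate $m(g^{-1})\leq\theta^{\,d_H(e,g^{-1})+1}$ with $\theta=\sup_{B(e,1)}m$, obtained by chaining unit horizontal steps along a near-minimizing path and using that $m$ is a homomorphism. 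This is the only place $d_H(e,x)$ can enter the final bound, and a correct proof must account for it whenever $G$ is not unimodular.
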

\begin{proof}
Since $L$ is hypoelliptic and $Lf=0$, it follows (see Corollary III.1.3 of \cite{vscc} for example) that there exists a constant $D_0=D_0(\alpha)$ such that
\[|f(e)|\leq D_0||f||_{L^1(B(e,\alpha),\lambda)}.\]
Observe that the function $f\circ L_g$ is also $L$-harmonic for any $g\in G$ since $L$ is left-invariant.  So
\[|f(g)| \leq D_0||f\circ L_g||_{L^1(B(e,\alpha),\lambda)}=D_0m(g^{-1})||f||_{L^1(B(g,\alpha),\lambda)}\]
where $m$ denotes the modular function on $G$.  Now using the heat kernel lower bounds from Proposition \ref{p.346}
\begin{align*}
||f||_{L^1(B(g,\alpha),\lambda)} &=\int_{B(g,\alpha)}|f(y)|\frac{\mu_T(x,y)}{\mu_T(x,y)}\lambda(dy)\\ &\leq \left(\inf_{y\in B(g,\alpha) } \mu_T(x,y)\right)^{-1} ||f||_{L^1(G,\mu_T^x)}\\ &\leq ae^{\frac{C}{2T}(d_H(x,g)+\alpha)^2}||f||_{L^1(G,\mu_T^x)},
\end{align*}
for the constant $a^{-1}=c\left(1+\frac{1}{2T}\right)^{\nu/2}e^{-CT}$.  It follows that
\[\sup_{g\in B(x,n)}|f(g)|\leq aD_0 e^{\frac{C}{2T}(n+\alpha)^2}||f||_{L^1(G,\mu_T^x)}\left(\sup_{g\in B(x,n)} m(g^{-1})\right).\]
It remains to show that the modular function has at most exponential growth in the horizontal distance.

Let $\gamma$ denote a horizontal path such that $\gamma(0)=e$ and $\gamma(1)=g^{-1}$.  Define a sequence of times $t_0,t_1,t_2,...,t_r$ by setting $t_0=0$ and $t_{k+1}$ to be the first time $s>t_k$ such that $d_H(\gamma(t_k),\gamma(s))=1$.  Note that $\lfloor l_H(\gamma)\rfloor=r$ and that $d_H(\gamma(t_r),g^{-1})<1$.  Then \[g^{-1}=\gamma(t_0)\gamma(t_1)^{-1}\gamma(t_1)\gamma(t_2)^{-1}\gamma(t_2)\hdots \gamma(t_r)\gamma(t_r)^{-1}g^{-1},\]
and each $\gamma(t_k)\gamma(t_{k+1})^{-1}$ is contained in $B(e,1)$.  The modular function is a homomorphism, and so
\[m(g^{-1})=m(\gamma(t_0)\gamma(t_1)^{-1})m(\gamma(t_1)\gamma(t_2)^{-1})\hdots m(\gamma(t_r)^{-1}g^{-1}).\]
If we set $\theta=\sup_{y\in B(e,1)}m(y)$, then we see that
\[m(g^{-1})\leq \theta^{l_H(\gamma)+1}.\]
Taking the infimum over all such $\gamma$ gives that $m(g^{-1})\leq \theta^{d_H(e,g^{-1})+1}$.  It follows that $\sup_{g\in B(x,n)} m(g^{-1})\leq \theta^{d_H(e,x)+n+1}$.  The statement of the proposition follows with $D=aD_0\theta$ and $\beta=\ln{\theta}$.
\end{proof}

For any positive integer $n$,
\begin{align*}
P_x(n-1\leq d_H(X_t,x)\leq n+1) &\leq \left(\sup_{y\in B(x,n+1)}\mu_t(x,y)\right)V(x,n+1).
\end{align*}
The heat kernel upper bounds of Proposition \ref{p.346} above with $\kappa=\frac{1}{2}$ give
\[\sup_{y\in B(x,n+1)}\mu_t(x,y)\leq C_\kappa\left(1+\frac{1}{2t}\right)^{\nu/2}e^{C_\kappa t}e^{-(n+1)^2/4t}.\]
Along with the volume estimates on balls, we get that
\begin{align*}
P_x(n-1\leq d_H(X_t,x)\leq n+1)& \leq C_\kappa b\left(1+\frac{1}{2t}\right)^{\nu/2}e^{C_\kappa t}e^{k(n+1)} e^{-(n+1)^2/4t}.
\end{align*}
Putting this together with Proposition \ref{p.592} ($\alpha=1$) gives
\begin{align*}
\left(\sup_{g\in B(x,n)}|f(g)|\right)P_x(n-1\leq d_H(X_t,x)\leq n+1)^{\frac{1}{p}} &\leq \tilde{C} e^{\left(\frac{C}{T}-\frac{1}{4p t}\right)(n+1)^2+(\beta+\frac{k}{p}) n},
\end{align*}
where
\[\tilde{C}=\left(C_\kappa b\left(1+\frac{1}{2t}\right)^{\nu/2}e^{C_\kappa t+k}\right)^{\frac{1}{p}}De^{\beta d_H(e,x)}||f||_{L^1(G,\mu_T^x)}.\]
It follows now that if we chose $t<t_0=\frac{T}{4p C}$, then $\frac{C}{T}-\frac{1}{4p t}<0$, and this expression goes to zero as $n\rightarrow\infty$.  This proves condition $(B)$ of Theorem \ref{t.337}.

\appendix
\section{Infinite-dimensional applications}

We now briefly discuss some of the hoped-for infinite-dimensional
applications of our results. The first application is to give a meaning to
the heat operator on certain infinite-dimensional manifolds and groups. There are by
now many infinite-dimensional manifolds for which a based heat kernel \textit{%
measure} $\mu^o _{T}$ is known to exist, usually constructed as the
distribution of an appropriate stochastic process.  A particularly simple example is Wiener measure of Example \ref{e.3.3}.  Other examples include \cite{cecil_driver,driver_int,driver_gordina_heat,fang,gordina_orthogonal,gordina_wu,malliavin,wu} among others.

Up to now, however, very
little has been said about the associated heat \textit{operator}, which one
would like to realize as an operator on some natural Banach space of
functions. Since there is no infinite-dimensional analog of the
Riemannian volume measure (or Haar measure in the group case), it is natural to use the heat kernel measure
itself. We may attempt, then, to define the heat operator on $L^{p}(M,\mu^o
_{T})$ for some $T>0$ and some fixed basepoint $o\in M$.
Unfortunately, even in the case of an infinite-dimensional Euclidean space, the Laplacian is not a closable operator in $L^{p}(M,\mu^o
_{T}),$ which means that we are not going to be able to construct the heat
operator as any sort of reasonable semigroup mapping $L^{p}(M,\mu^o _{T})$
\textit{to itself}

Our results show, however, that the heat operator $e^{t\Delta /2}$ (or more generally $e^{tL}$) is a nice
operator (a contraction) from $L^{p}(M,\mu^o _{T})$ to a \textit{different
space}, namely $L^{p}(M,\mu^o _{T-t}),$ provided that we
have the natural semigroup property for the heat kernel measures (and this
property will surely hold whenever the measure is constructed as the
transition probabilities of a Markov process). In addition, this approach allows us to define the heat operator on $L^p(M,\mu^o_T)$
directly, without having to work first on some sort of cylinder functions and extend the operator by linearity.

The second infinite-dimensional application is to construct a sort of
\textquotedblleft holomorphic regular version\textquotedblright\ in the
sense of Sugita \cite{sugita_holomorphic,sugita_regular}. In the
infinite-dimensional case, we expect that $\mu^o _{s}$ and $\mu^o _{t}$ will be
mutually singular when $s<t$; the two measures are certainly mutually
singular in the flat case. Nevertheless, in nice cases, holomorphic
functions are automatically harmonic, and so we expect to be able to apply
Theorem \ref{t.1.1} to these functions. This means that the norm of an $L^{p}$
holomorphic function with respect to $\mu^o _{s}$ should be no greater than
the $L^{p}$ norm with respect to $\mu^o _{t},$ even though $\mu^o _{s}$ is
singular with respect to $\mu^o _{t}.$ This monotonicity of the norms is
already known in certain infinite dimensional group cases when $p=2$ and the manifold is a complex group. This follows from the Taylor
expansion, proven first in the context of a complex finite dimensional group by Driver \& Gross \cite{driver_gross}, then extended to infinite dimensional cases by \cite{cecil_taylor,driver_gordina_square,gordina_orthogonal}. Even for general $p$ and a general manifold, the result is
elementary at a formal level, but to our knowledge has not been remarked upon until
now.

It should be noted that in this last application,
we do not seem to be using the fact that the functions involved are
holomorphic, but only that they are harmonic. This suggests that one might
be able to develop a theory of $L^{p}$ harmonic functions parallel to what
one has in the holomorphic case. There are, however, some subtle
difficulties with this idea, connected with the fact that the infinite-dimensional Laplacian is
not a closable operator.

\begin{bibdiv}
\begin{biblist}
\bib{bakry_bolley_gentil}{article}{
author={Bakry, D.},
author={Bolley, F.},
author={Gentil, I.},
title={Dimension dependent hypercontractivity for Gaussian kernels},
}
\bib{bauer}{book}{
   author={Bauer, H.},
   title={Probability theory},
   series={de Gruyter Studies in Mathematics},
   volume={23},
   note={Translated from the fourth (1991) German edition by Robert B.
   Burckel and revised by the author},
   publisher={Walter de Gruyter \& Co.},
   place={Berlin},
   date={1996},
   pages={xvi+523},
   isbn={3-11-013935-9},
   review={\MR{1385460 (97f:60001)}},
}
\bib{BBlog}{article}{
   author={Baudoin, F.},
   author={Bonnefont, M.},
   title={Log-Sobolev inequalities for subelliptic operators satisfying a
   generalized curvature dimension inequality},
   journal={J. Funct. Anal.},
   volume={262},
   date={2012},
   number={6},
   pages={2646--2676},
   issn={0022-1236},
   review={\MR{2885961}},
   doi={10.1016/j.jfa.2011.12.020},
}
\bib{BB2}{article}{
author={Baudoin, F.},
author={Bonnefont, M.},
author={Garofalo, N.},
title={A sub-Riemannian curvature-dimension inequality, volume doubling property and the Poincaré inequality}
}
\bib{BG1}{article}{
author={Baudoin, F.},
author={Garofalo, N.},
title={Curvature-dimension inequalities and Ricci lower bounds for subriemannian manifolds with transverse symmetries}
}
\bib{bgm}{article}{
author={Baudoin, F.},
author={Gordina, M.},
author={Melcher, T.},
title={Quasi-invariance for Heat Kernel Measures on Sub-Riemannian Infinite-dimensional Heisenberg groups}
}

\bib{cecil_taylor}{article}{
   author={Cecil, M.},
   title={The Taylor map on complex path groups},
   journal={J. Funct. Anal.},
   volume={254},
   date={2008},
   number={2},
   pages={318--367},
   issn={0022-1236},
   review={\MR{2376574 (2009j:58056)}},
   doi={10.1016/j.jfa.2007.09.018},
}

\bib{cecil_driver}{article}{
   author={Cecil, M.},
   author={Driver, B. K.},
   title={Heat kernel measure on loop and path groups},
   journal={Infin. Dimens. Anal. Quantum Probab. Relat. Top.},
   volume={11},
   date={2008},
   number={2},
   pages={135--156},
   issn={0219-0257},
   review={\MR{2426712 (2009e:58012)}},
   doi={10.1142/S0219025708003099},
}

\bib{cheeger_yau}{article}{
   author={Cheeger, J.},
   author={Yau, S.-T.},
   title={A lower bound for the heat kernel},
   journal={Comm. Pure Appl. Math.},
   volume={34},
   date={1981},
   number={4},
   pages={465--480},
   issn={0010-3640},
   review={\MR{615626 (82i:58065)}},
   doi={10.1002/cpa.3160340404},
}
\bib{cheng_li_yau}{article}{
   author={Cheng, S. Y.},
   author={Li, P.},
   author={Yau, S.-T.},
   title={On the upper estimate of the heat kernel of a complete Riemannian
   manifold},
   journal={Amer. J. Math.},
   volume={103},
   date={1981},
   number={5},
   pages={1021--1063},
   issn={0002-9327},
   review={\MR{630777 (83c:58083)}},
   doi={10.2307/2374257},
}
\bib{davies_heatkernels}{book}{
   author={Davies, E. B.},
   title={Heat kernels and spectral theory},
   series={Cambridge Tracts in Mathematics},
   volume={92},
   publisher={Cambridge University Press},
   place={Cambridge},
   date={1990},
   pages={x+197},
   isbn={0-521-40997-7},
   review={\MR{1103113 (92a:35035)}},
}

\bib{davies_mandouvalos}{article}{
   author={Davies, E. B.},
   author={Mandouvalos, N.},
   title={Heat kernel bounds on hyperbolic space and Kleinian groups},
   journal={Proc. London Math. Soc. (3)},
   volume={57},
   date={1988},
   number={1},
   pages={182--208},
   issn={0024-6115},
   review={\MR{940434 (89i:58137)}},
   doi={10.1112/plms/s3-57.1.182},
}
\bib{davies_state}{article}{
   author={Davies, E. B.},
   title={The state of the art for heat kernel bounds on negatively curved
   manifolds},
   journal={Bull. London Math. Soc.},
   volume={25},
   date={1993},
   number={3},
   pages={289--292},
   issn={0024-6093},
   review={\MR{1209255 (94f:58121)}},
   doi={10.1112/blms/25.3.289},
}
\bib{driver_int}{article}{
   author={Driver, B. K.},
   title={Integration by parts and quasi-invariance for heat kernel measures
   on loop groups},
   journal={J. Funct. Anal.},
   volume={149},
   date={1997},
   number={2},
   pages={470--547},
   issn={0022-1236},
   review={\MR{1472366 (99a:60054a)}},
   doi={10.1006/jfan.1997.3103},
}
\bib{driver_gordina_heat}{article}{
   author={Driver, B. K.},
   author={Gordina, M.},
   title={Heat kernel analysis on infinite-dimensional Heisenberg groups},
   journal={J. Funct. Anal.},
   volume={255},
   date={2008},
   number={9},
   pages={2395--2461},
   issn={0022-1236},
   review={\MR{2473262 (2010f:60010)}},
   doi={10.1016/j.jfa.2008.06.021},
}

\bib{driver_gordina}{article}{
   author={Driver, B. K.},
   author={Gordina, M.},
   title={Integrated Harnack inequalities on Lie groups},
   journal={J. Differential Geom.},
   volume={83},
   date={2009},
   number={3},
   pages={501--550},
   issn={0022-040X},
   review={\MR{2581356}},
}
\bib{driver_gordina_square}{article}{
   author={Driver, B. K.},
   author={Gordina, M.},
   title={Square integrable holomorphic functions on infinite-dimensional
   Heisenberg type groups},
   journal={Probab. Theory Related Fields},
   volume={147},
   date={2010},
   number={3-4},
   pages={481--528},
   issn={0178-8051},
   review={\MR{2639713 (2011k:58052)}},
   doi={10.1007/s00440-009-0213-y},
}
\bib{driver_gross}{article}{
   author={Driver, B. K.},
   author={Gross, L.},
   title={Hilbert spaces of holomorphic functions on complex Lie groups},
   conference={
      title={New trends in stochastic analysis},
      address={Charingworth},
      date={1994},
   },
   book={
      publisher={World Sci. Publ., River Edge, NJ},
   },
   date={1997},
   pages={76--106},
   review={\MR{1654507 (2000h:46029)}},
}
\bib{dgsc_subelliptic}{article}{
   author={Driver, B. K.},
   author={Gross, L.},
   author={Saloff-Coste, Laurent},
   title={Holomorphic functions and subelliptic heat kernels over Lie
   groups},
   journal={J. Eur. Math. Soc. (JEMS)},
   volume={11},
   date={2009},
   number={5},
   pages={941--978},
   issn={1435-9855},
   review={\MR{2538496 (2010h:32052)}},
   doi={10.4171/JEMS/171},
}
\bib{dynkin_super}{book}{
   author={Dynkin, E. B.},
   title={Diffusions, superdiffusions and partial differential equations},
   series={American Mathematical Society Colloquium Publications},
   volume={50},
   publisher={American Mathematical Society},
   place={Providence, RI},
   date={2002},
   pages={xii+236},
   isbn={0-8218-3174-7},
   review={\MR{1883198 (2003c:60001)}},
}
\bib{fang}{article}{
   author={Fang, S.},
   title={Canonical Brownian motion on the diffeomorphism group of the
   circle},
   journal={J. Funct. Anal.},
   volume={196},
   date={2002},
   number={1},
   pages={162--179},
   issn={0022-1236},
   review={\MR{1941996 (2003k:58060)}},
   doi={10.1006/jfan.2002.3922},
}
\bib{grigoryan_metric}{article}{
   author={Grigor'yan, A.},
   title={Heat kernels and function theory on metric measure spaces},
   conference={
      title={},
      address={Paris},
      date={2002},
   },
   book={
      series={Contemp. Math.},
      volume={338},
      publisher={Amer. Math. Soc.},
      place={Providence, RI},
   },
   date={2003},
   pages={143--172},
   review={\MR{2039954 (2005c:60096)}},
}
\bib{gordina_orthogonal}{article}{
   author={Gordina, M.},
   title={Holomorphic functions and the heat kernel measure on an
   infinite-dimensional complex orthogonal group},
   journal={Potential Anal.},
   volume={12},
   date={2000},
   number={4},
   pages={325--357},
   issn={0926-2601},
   review={\MR{1771796 (2001h:60006)}},
   doi={10.1023/A:1008626828889},
}
\bib{gordina_wu}{article}{
   author={Gordina, M.},
   author={Wu, M.},
   title={Diffeomorphisms of the circle and Brownian motions on an
   infinite-dimensional symplectic group},
   journal={Commun. Stoch. Anal.},
   volume={2},
   date={2008},
   number={1},
   pages={71--95},
   issn={0973-9599},
   review={\MR{2446912 (2009k:58073)}},
}
\bib{grigoryan_heatkernel}{book}{
   author={Grigor'yan, A.},
   title={Heat kernel and analysis on manifolds},
   series={AMS/IP Studies in Advanced Mathematics},
   volume={47},
   publisher={American Mathematical Society},
   place={Providence, RI},
   date={2009},
   pages={xviii+482},
   isbn={978-0-8218-4935-4},
   review={\MR{2569498}},
}
\bib{grimmett}{book}{
   author={Grimmett, G. R.},
   author={Stirzaker, D. R.},
   title={Probability and random processes},
   edition={3},
   publisher={Oxford University Press},
   place={New York},
   date={2001},
   pages={xii+596},
   isbn={0-19-857223-9},
   review={\MR{2059709 (2004m:60002)}},
}

\bib{Hx}{article}{
   author={Hall, B. C.},
   title={The heat operator in infinite dimensions},
   conference={
      title={Infinite dimensional stochastic analysis},
   },
   book={
      series={QP--PQ: Quantum Probab. White Noise Anal.},
      volume={22},
      publisher={World Sci. Publ., Hackensack, NJ},
   },
   date={2008},
   pages={161--174},
   review={\MR{2412888 (2010a:58044)}},
}
\bib{hall_driver}{article}{
   author={Driver, B. K.},
   author={Hall, B. C.},
   title={Yang-Mills theory and the Segal-Bargmann transform},
   journal={Comm. Math. Phys.},
   volume={201},
   date={1999},
   number={2},
   pages={249--290},
   issn={0010-3616},
   review={\MR{1682238 (2000c:58064)}},
   doi={10.1007/s002200050555},
}

\bib{hall_sengupta}{article}{
   author={Hall, B. C.},
   author={Sengupta, A. N.},
   title={The Segal-Bargmann transform for path-groups},
   journal={J. Funct. Anal.},
   volume={152},
   date={1998},
   number={1},
   pages={220--254},
   issn={0022-1236},
   review={\MR{1600083 (99a:58020)}},
   doi={10.1006/jfan.1997.3159},
}
\bib{hsu_book}{book}{
   author={Hsu, E. P.},
   title={Stochastic analysis on manifolds},
   series={Graduate Studies in Mathematics},
   volume={38},
   publisher={American Mathematical Society},
   place={Providence, RI},
   date={2002},
   pages={xiv+281},
   isbn={0-8218-0802-8},
   review={\MR{1882015 (2003c:58026)}},
}

\bib{hsu_semi}{article}{
   author={Hsu, P.},
   title={Heat semigroup on a complete Riemannian manifold},
   journal={Ann. Probab.},
   volume={17},
   date={1989},
   number={3},
   pages={1248--1254},
   issn={0091-1798},
   review={\MR{1009455 (90j:58158)}},
}
\bib{kotschwar}{article}{
   author={Kotschwar, B. L.},
   title={Hamilton's gradient estimate for the heat kernel on complete
   manifolds},
   journal={Proc. Amer. Math. Soc.},
   volume={135},
   date={2007},
   number={9},
   pages={3013--3019 (electronic)},
   issn={0002-9939},
   review={\MR{2317980 (2008e:58034)}},
   doi={10.1090/S0002-9939-07-08837-5},
}
\bib{li_schoen}{article}{
   author={Li, P.},
   author={Schoen, R.},
   title={$L^p$ and mean value properties of subharmonic functions on
   Riemannian manifolds},
   journal={Acta Math.},
   volume={153},
   date={1984},
   number={3-4},
   pages={279--301},
   issn={0001-5962},
   review={\MR{766266 (86j:58147)}},
   doi={10.1007/BF02392380},
}
	
\bib{li_yau}{article}{
   author={Li, P.},
   author={Yau, S.-T.},
   title={On the parabolic kernel of the Schr\"odinger operator},
   journal={Acta Math.},
   volume={156},
   date={1986},
   number={3-4},
   pages={153--201},
   issn={0001-5962},
   review={\MR{834612 (87f:58156)}},
   doi={10.1007/BF02399203},
}
\bib{malliavin}{article}{
   author={Malliavin, P.},
   title={Diffusion on the loops},
   conference={
      title={},
      address={Chicago, Ill.},
      date={1981},
   },
   book={
      series={Wadsworth Math. Ser.},
      publisher={Wadsworth},
      place={Belmont, CA},
   },
   date={1983},
   pages={764--782},
   review={\MR{730108 (85k:58087)}},
}
\bib{schoen_yau}{book}{
   author={Schoen, R.},
   author={Yau, S.-T.},
   title={Lectures on differential geometry},
   series={Conference Proceedings and Lecture Notes in Geometry and
   Topology, I},
   note={Lecture notes prepared by Wei Yue Ding, Kung Ching Chang [Gong Qing
   Zhang], Jia Qing Zhong and Yi Chao Xu;
   Translated from the Chinese by Ding and S. Y. Cheng;
   Preface translated from the Chinese by Kaising Tso},
   publisher={International Press},
   place={Cambridge, MA},
   date={1994},
   pages={v+235},
   isbn={1-57146-012-8},
   review={\MR{1333601 (97d:53001)}},
}

\bib{shi}{article}{
   author={Shi, W.-X.},
   title={Ricci flow and the uniformization on complete noncompact K\"ahler
   manifolds},
   journal={J. Differential Geom.},
   volume={45},
   date={1997},
   number={1},
   pages={94--220},
   issn={0022-040X},
   review={\MR{1443333 (98d:53099)}},
}
\bib{sugita_holomorphic}{article}{
   author={Sugita, H.},
   title={Holomorphic Wiener function},
   conference={
      title={New trends in stochastic analysis},
      address={Charingworth},
      date={1994},
   },
   book={
      publisher={World Sci. Publ., River Edge, NJ},
   },
   date={1997},
   pages={399--415},
   review={\MR{1654388 (2000f:60083)}},
}
\bib{sugita_regular}{article}{
   author={Sugita, H.},
   title={Regular version of holomorphic Wiener function},
   journal={J. Math. Kyoto Univ.},
   volume={34},
   date={1994},
   number={4},
   pages={849--857},
   issn={0023-608X},
   review={\MR{1311623 (96b:60143)}},
}
\bib{vscc}{book}{
   author={Varopoulos, N. Th.},
   author={Saloff-Coste, L.},
   author={Coulhon, T.},
   title={Analysis and geometry on groups},
   series={Cambridge Tracts in Mathematics},
   volume={100},
   publisher={Cambridge University Press},
   place={Cambridge},
   date={1992},
   pages={xii+156},
   isbn={0-521-35382-3},
   review={\MR{1218884 (95f:43008)}},
}
\bib{wang_logarithmic}{article}{
   author={Wang, F.-Y.},
   title={Logarithmic Sobolev inequalities on noncompact Riemannian
   manifolds},
   journal={Probab. Theory Related Fields},
   volume={109},
   date={1997},
   number={3},
   pages={417--424},
   issn={0178-8051},
   review={\MR{1481127 (98i:58253)}},
   doi={10.1007/s004400050137},
}
\bib{wang_equivalence}{article}{
   author={Wang, F.-Y.},
   title={Equivalence of dimension-free Harnack inequality and curvature
   condition},
   journal={Integral Equations Operator Theory},
   volume={48},
   date={2004},
   number={4},
   pages={547--552},
   issn={0378-620X},
   review={\MR{2047597 (2004m:58061)}},
   doi={10.1007/s00020-002-1264-y},
}
	
\bib{wang_lower}{article}{
   author={Wang, F.-Y.},
   title={Sharp explicit lower bounds of heat kernels},
   journal={Ann. Probab.},
   volume={25},
   date={1997},
   number={4},
   pages={1995--2006},
   issn={0091-1798},
   review={\MR{1487443 (99e:58176)}},
   doi={10.1214/aop/1023481118},
}
\bib{wu}{article}{
   author={Wu, M.},
   title={A Brownian motion on the diffeomorphism group of the circle},
   journal={Potential Anal.},
   volume={34},
   date={2011},
   number={1},
   pages={23--41},
   issn={0926-2601},
   review={\MR{2740573}},
   doi={10.1007/s11118-010-9178-9},
}

\bib{yau_on}{article}{
   author={Yau, S.-T.},
   title={On the heat kernel of a complete Riemannian manifold},
   journal={J. Math. Pures Appl. (9)},
   volume={57},
   date={1978},
   number={2},
   pages={191--201},
   issn={0021-7824},
   review={\MR{505904 (81b:58041)}},
}

\end{biblist}
\end{bibdiv}

\end{document}